\newcommand{\C}{\mathbb{C}}
\newcommand{\N}{\mathbb{N}} 
\newcommand{\R}{\mathbb{R}}
\newcommand{\<}{\left<}
\newcommand{\ds}{\displaystyle}
\renewcommand{\>}{\right>}
\newcommand{\gorro}{\widetilde}
\newcommandx{\unsure}[2][1=]{\todo[linecolor=red,backgroundcolor=red!25,bordercolor=red,#1]{#2}}
\newcommandx{\change}[2][1=]{\todo[linecolor=blue,backgroundcolor=blue!25,bordercolor=blue,#1]{#2}}
\newcommandx{\info}[2][1=]{\todo[linecolor=OliveGreen,backgroundcolor=OliveGreen!25,bordercolor=OliveGreen,#1]{#2}}
\newcommandx{\improvement}[2][1=]{\todo[linecolor=Plum,backgroundcolor=Plum!25,bordercolor=Plum,#1]{#2}}
\newcommandx{\thiswillnotshow}[2][1=]{\todo[disable,#1]{#2}}
\newcommand{\mc}[1]{\mathcal{#1}}
\newcommand{\Rm}{{\rm Rm}}
\def\tr{{\rm tr}}
\def\fle{\rightarrow}
\def\parcial#1#2{\fracc{\partial #1}{\partial#2}}
\def\({\left (}
\def \){\right)}
\newtheorem{mainthm}{Theorem}[]
\newtheorem{maincor}[mainthm]{Corollary}
\newtheorem{thm}{Theorem}[section]
\newtheorem*{thm*}{Theorem} 
\newtheorem{cor}[thm]{Corollary}\newtheorem{prop}[thm]{Proposition}
\newtheorem{lem}[thm]{Lemma}
\theoremstyle{definition}
\newtheorem{defin}[thm]{Definition}
\newtheorem*{rems*}{Remarks}
\theoremstyle{remark}
\newtheorem{rem}[thm]{Remark}
\newcommand{\eps}{\ensuremath{\varepsilon}}
\newtheorem{teor}{\hspace{12pt} Theorem}
\newtheorem{lema}[teor]{\hspace{12pt} Lemma}
\newcommand{\bigzero}{\mbox{\normalfont\LARGE  0}}
\newcommand{\rvline}{\hspace*{-\arraycolsep}\vline\hspace*{-\arraycolsep}}
\numberwithin{teor}{section}
\newcommand{\be}{\begin{enumerate}}
\newcommand{\ee}{\end{enumerate}}
\newcommand{\bi}{\begin{itemize}}
\newcommand{\ei}{\end{itemize}}
\newcommand{\bd}{\begin{description}}
\newcommand{\ed}{\end{description}}
\newcommand{\bec}{\begin{equation}}
\newcommand{\eec}{\end{equation}}
\newcommand{\ba}{\begin{array}}
\newcommand{\ea}{\end{array}}
\newcommand{\bt}{\begin{thm}}
\newcommand{\et}{\end{thm}}
\newcommand{\bdem}{\begin{proof}}
\newcommand{\edem}{\end{proof}}
\newcommand{\bl}{\begin{lema}}
\newcommand{\el}{\end{lema}}
\newcommand{\bnp}{\begin{rem}}
\newcommand{\enp}{\end{rem}}
\newcommand{\bde}{\begin{defin}}
\newcommand{\ede}{\end{defin}}
\newcommand{\bnod}{\begin{rem}}
\newcommand{\enod}{\end{rem}}
\newcommand{\bp}{\begin{prop}}
\newcommand{\ep}{\end{prop}}
\newcommand{\bco}{\begin{cor}}
\newcommand{\eco}{\end{cor}}
\newcommand{\nn}{\nonumber}
\newcommand{\lb}{\label}
\newcommand{\I}{{\rm I}}
\newcommand{\G}{\mathsf{G}}
\DeclareMathOperator{\Ric}{Ric}\DeclareMathOperator{\ad}{ad}
\DeclareMathOperator{\scal}{scal}
\DeclareMathOperator{\Or}{O}
\newcommand{\so}{\mathfrak{so}}
\newcommand{\Lg}{\mathfrak{g}}
\DeclareMathOperator{\trace}{tr}
\DeclareMathOperator{\vol}{vol}\DeclareMathOperator{\Ad}{Ad}
\DeclareMathOperator{\id}{id}
 \DeclareMathOperator{\diam}{diam}
\DeclareMathOperator{\rank}{rank}
\DeclareMathOperator{\spann}{span}
\newcommand{\ene}{\end{equation} }
\renewcommand{\(}{\left(}
\renewcommand{\>}{\right>}
\renewcommand{\)}{\right)}
\def\bal{\begin{align}}
\def\eal{\end{align}}
\numberwithin{equation}{section}
\def\be{\begin{equation}}
\def\ee{\end{equation}}
\def\tr{{\rm tr}}
\def\parcial#1#2{\frac{\partial #1}{\partial#2}}
\def\fle{\rightarrow}
\def\ds{\displaystyle}
\begin{document}

\markright{}

%\newpage

\reversemarginpar

\pagestyle{myheadings}%\markboth{}%\markleft{}

\title{The Ricci flow under almost non-negative curvature conditions}

\author{Richard H.~ Bamler}
\address{Department of Mathematics, UC Berkeley, Berkeley, CA 94720, USA}
\email{rbamler@math.berkeley.edu}

\author{Esther Cabezas-Rivas}
\address{Goethe-Universit\"at Frankfurt, Robert-Mayer-Str.~10, 60325 Frankfurt, Germany}
\email{cabezas-rivas@math.uni-frankfurt.de}

\author{Burkhard Wilking}
\address{University of M\"unster, Einsteinstrasse 62, 48149 M\"unster, Germany}
\email{wilking@math.uni-muenster.de}

\thanks{The first named author is supported in part by the Alfred P. Sloan Foundation through a Sloan Research Fellowship and by the National Science Foundation under Grant No. DMS-1611906. 
\\ \hspace*{3.7mm} The second named author was partially supported by by  the
MINECO (Spain) and FEDER  project MTM2016-77093-P.
\\ \hspace*{3.7mm}
We would like to thank the American Institute of
Mathematics (AIM) for hosting the workshop {\it Geometric flows and Riemannian geometry} which brought the authors together
and at which occasion the initial questions for this project were
discussed.}

\maketitle

\begin{abstract}
We generalize most of the known Ricci flow invariant non-negative curvature conditions to less restrictive negative bounds that remain sufficiently controlled for a short time.

As an illustration of the contents of the paper, we prove that metrics whose curvature operator has eigenvalues greater than $-1$ can be evolved by the Ricci flow for some uniform time such that the eigenvalues of the curvature operator remain greater than $-C$.
Here the time of existence and the constant $C$ only depend on the dimension and the degree of non-collapsedness. We obtain similar generalizations for other invariant curvature conditions, including positive biholomorphic curvature in the K\"ahler case. We also get a local version of the main theorem.
 
As an application of our almost preservation results we deduce a variety of gap and smoothing results of independent interest, including  a classification  for non-collapsed manifolds with almost non-negative curvature operator and a smoothing result for singular spaces coming from sequences of manifolds with lower curvature bounds.
We also obtain a short-time existence result for the Ricci flow on open manifolds with almost non-negative curvature (without requiring upper curvature bounds).

\end{abstract}

\section{Introduction and main results}
The search for invariant curvature conditions has proven to be key to the study of Ricci flows. While this search has been very fruitful, most of the known invariant curvature conditions are rather restrictive, because they impose strong positivity requirements on certain curvature quantities.
For example they entail that the scalar, Ricci or even sectional curvature is positive, which heavily constrains  the topology of the underlying manifold.

In this paper, we show that many of the known invariant curvature conditions can be generalized to curvature bounds that deteriorate under the flow by at most a controlled factor within a short time-interval. 
These bounds only require that certain curvature quantities are bounded from below by a negative constant. Therefore, they hold for any metric after rescaling by a sufficiently large factor and hence we don't impose any topological restrictions.

Our first main result generalizes the invariance of the non-negativity of the curvature operator, which was originally observed by Hamilton (see \cite{Ham86}) and further studied by B\"ohm and the third author (see \cite{BW2}). 
Recall that we cannot expect that negative lower bounds for the curvature operator are in general invariant under the Ricci flow (see e.g.~\cite{Davi} for a counterexample).
The following theorem serves as an illustration for generalizations of a larger class of invariant curvature conditions, as presented in Theorem~\ref{thm:ann_gral} below.

\begin{mainthm}\label{thm:annco} 
Given $n \in \mathbb N$ and a constant $v_0 >0$, there exist 
positive constants $C = C(n, v_0) > 0$ and $\tau = \tau(n, v_0) > 0$  such that the following holds.  

Let $(M^n,g)$ be a complete Riemannian manifold  with bounded curvature satisfying
$$\vol_g\big(B_g(p, 1)\big) \ge v_0 \quad \text{ for all }  p\in M \qquad \text{ and} \quad  \Rm_g\ge -\eps\ge -1,$$
 i.e. the lowest eigenvalue of
$\Rm_g$ is bounded below by $-\eps \in [-1,0]$. Then the Ricci flow $g(t)$ with initial metric $g$ exists until time $\tau$,
and we have the curvature bounds
 $$\Rm_{g(t)}\ge -C \eps \quad \text{
 and } \quad |{\Rm_{g(t)}}| \le \frac{C}{t} \quad \text{ for all } t\in (0,\tau].$$  
\end{mainthm}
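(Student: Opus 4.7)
The proof strategy has two parts: establishing short-time existence with the Shi-type decay $|\Rm_{g(t)}|\le C/t$, and proving the almost-preservation bound $\Rm_{g(t)}\ge -C\eps$. The overall approach for the latter is contradiction and compactness, reducing the almost-preservation to Hamilton's preservation theorem for the non-negative curvature operator cone (and the B\"ohm--Wilking machinery built around it).

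The existence and decay follow from a Perelman-type pseudolocality / smoothing argument for non-collapsed Ricci flows with lower curvature bound $\Rm\ge -1$. Shi's theorem provides a Ricci flow on a small interval. The Bishop--Gromov volume comparison (using $\Rm\ge -1$) together with the non-collapsing $\vol_g(B_g(p,1))\ge v_0$ supplies scale-invariant volume lower bounds, and a standard point-picking plus Shi-derivative argument then converts these into uniform curvature decay $|\Rm_{g(t)}|\le C/t$ on a uniform interval $[0,\tau(n,v_0)]$. Crucially this step only uses the loose bound $\Rm\ge -1$ and is insensitive to the finer parameter $\eps$.

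For the almost-preservation bound, suppose it fails: one obtains a sequence $(M_i^n,g_i)$ of initial data satisfying the hypotheses with $\Rm_{g_i}\ge -\eps_i$, together with space-time points $(x_i,t_i)\in M_i\times(0,\tau]$ at which the smallest eigenvalue $\mu_i$ of $\Rm_{g_i(t_i)}$ satisfies $\mu_i<-C_i\eps_i$ with $C_i\to\infty$. A Hamilton-type point-selection argument arranges that $t_i$ is essentially the \emph{first} time at which a violation of this size occurs. I then perform the parabolic rescaling $\tilde g_i(\tilde t)=|\mu_i|\,g_i(\tilde t/|\mu_i|)$. The rescaled flows have initial curvature $\tilde\Rm_{\tilde g_i(0)}\ge -\eps_i/|\mu_i|>-1/C_i\to 0$, satisfy $|\tilde\Rm|\le C/\tilde t$ throughout (rescaled decay estimate), and have smallest eigenvalue exactly $-1$ at the rescaled point $(x_i,\tilde t_i)$ with $\tilde t_i=|\mu_i|t_i\le C$. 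Non-collapsedness is preserved at the rescaled unit scale via Bishop--Gromov. Cheeger--Gromov--Hamilton compactness then extracts a pointed smooth limit Ricci flow $(M_\infty,g_\infty(\tilde t),x_\infty)$ on an interval containing $\tilde t_\infty:=\lim\tilde t_i>0$. The smoothing estimates applied to the limit upgrade the weak initial non-negativity to pointwise $\Rm_{g_\infty(0)}\ge 0$, while at $(x_\infty,\tilde t_\infty)$ the smallest eigenvalue equals $-1$, contradicting Hamilton's preservation theorem~\cite{Ham86,BW2}.

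The main obstacle will be showing that the rescaled critical time $\tilde t_i=|\mu_i|t_i$ is bounded below by a positive constant, so that the limit flow is non-degenerate and the preservation theorem applies. I expect to handle this by coupling the point-selection to the explicit deterioration rate coming from the smoothing step: if $\tilde t_i\to 0$ then the rescaled smoothing estimate forces the initial rescaled curvature to be within $o(1)$ of non-negative in a stronger quantitative sense than $-1/C_i$, which in turn forces $|\mu_i|/\eps_i$ to remain bounded and contradicts $C_i\to\infty$. A secondary subtlety is transferring the initial curvature bound to the limit flow, handled by combining the uniform smoothing with Hamilton's strong maximum principle for $\tilde t>0$.
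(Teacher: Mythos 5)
Your first part (existence and the decay $|\Rm_{g(t)}|\le C/t$) follows the same route as the paper: a point--picking blow-up together with non-collapsing and Bishop--Gromov, showing that any failure of the decay produces a non-flat ancient solution with positive asymptotic volume ratio. (The paper uses Perelman's vanishing asymptotic volume ratio for $\kappa$-solutions \cite[11.4]{P1}; your pseudolocality variant is a reasonable substitute.) But your second part -- the almost-preservation $\Rm_{g(t)}\ge -C\eps$ -- diverges sharply from the paper and, as written, has a genuine gap precisely where you flag the ``main obstacle.''

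The difficulty is not technical. If $\tilde t_i = |\mu_i|\,t_i \to 0$, your compactness argument produces nothing to contradict Hamilton's preservation, and the proposed fix is circular: you claim ``the rescaled smoothing estimate forces the initial rescaled curvature to be within $o(1)$ of non-negative,'' but the two-sided decay $|\Rm_{g(t)}|\le C/t$ gives no information about the \emph{rate} at which the negative part of the smallest eigenvalue can grow -- controlling exactly that rate is the content of the theorem you are trying to prove. Concretely, the negative part $\ell$ of the smallest eigenvalue satisfies (Proposition~\ref{lem:ell}) an evolution inequality
$\partial_t \ell \le \Delta\ell + \scal\,\ell + C\ell^2$, where the scalar curvature is as large as $C/t$; integrating this na\"ively from a time $t_0>0$ to $t$ gives $\ell(t)\lesssim \ell(t_0)\,(t/t_0)^{C}$, which diverges as $t_0\to 0$, so on the face of it $\ell$ \emph{can} jump from $O(1/C_i)$ to $1$ in vanishing rescaled time. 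Nothing in your proposal excludes this. The paper excludes it by an entirely different mechanism: an algebraic computation (Lemma~\ref{lem_ell_equation} and the $S=S_1$ case of Proposition~\ref{lem:ell}) shows that the coefficient in front of $\scal\,\ell$ can be taken to be \emph{exactly}~$1$. This is the sharp value for which the $\scal$ reaction term is cancelled by the volume-element distortion $\partial_t\,d\mu_t = -\scal\,d\mu_t$, which makes the heat kernel $G$ of $\partial_t-\Delta-\scal$ have conserved $L^1$-norm \eqref{eq_int_G_1}. From there the Gaussian upper bound of Proposition~\ref{thm:heat} turns the linear barrier $h(x,t)=\eps\int_M G(x,t;y,0)\,d\mu_0(y)$ into the pointwise bound $\ell\le C_4\eps$ uniformly down to $t\searrow 0$; see \eqref{eq_ell_eps_bound}. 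This step is the heart of Theorem~\ref{thm:annco}, and your contradiction/compactness scheme does not touch it.

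A secondary issue, independent of the one above: even when $\tilde t_i\to \tilde t_\infty>0$, the smoothing estimate only gives curvature bounds of the form $C/\tilde t$ and hence Cheeger--Gromov--Hamilton convergence on $(0,\tilde t_\infty]$, not on a closed interval including $\tilde t=0$. You cannot simply read off $\Rm_{g_\infty(0)}\ge 0$ from the limit; some additional work (e.g.\ a continuity argument as $\tilde t\searrow 0$, or again something like the paper's heat-kernel estimate) is needed to compare a positive-time slice of the limit to the initial data of the sequence. In short, the paper replaces the compactness/contradiction route by a quantitative parabolic estimate for $\ell$ that survives the $1/t$ singularity of $\scal$, and without reproducing that estimate (or the exact-coefficient-$1$ fact that drives it) the contradiction scheme cannot close.
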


Notice that the bound $\Rm_g \geq - \eps$ in the theorem above can be rephrased by saying that the linear combination $\Rm_g + \eps \, \I$, where $\I$ denotes the curvature operator of the unit round $n$-sphere, is non-negative definite.
Using this point of view, we can generalize Theorem~\ref{thm:annco} to further invariant curvature conditions; see the following theorem.
Hereafter we will denote curvature conditions by $\mc C$ and we will write $\Rm_g \in \mc C$ to indicate that $\Rm_g$ satisfies the corresponding curvature condition.

\begin{mainthm} \lb{thm:ann_gral}
Given $n \in \mathbb N$ and a constant $v_0 >0$, there exist 
positive constants $C = C(n, v_0) > 0$ and $\tau = \tau(n, v_0) > 0$  such that the following holds.

Let $(M^n,g)$ be a complete Riemannian manifold  with bounded curvature and consider one of the following curvature conditions $\mc C$:

\begin{enumerate}[label=(\arabic*)]
\item[{\rm (1)}] non-negative curvature operator,
\item[{\rm (2)}] 2-non-negative curvature operator \\
(i.e. the sum of the lowest two eigenvalues is non-negative),
\item[{\rm (3)}] non-negative complex sectional curvature \\
 (i.e. weakly $\text{\rm PIC}_2$, meaning that taking the cartesian product with $\R^2$ produces a non-negative isotropic curvature operator),
\item[{\rm (4)}] weakly $\text{\rm PIC}_1$ \\ (i.e. taking the cartesian product with $\R$ produces a non-negative isotropic curvature operator),
\item[{\rm (5)}] non-negative bisectional curvature, in the case in which $(M,g)$ is K\"ahler with respect to some complex structure $J$.
\end{enumerate}

Assume that
\bec \lb{hyp_thm2}
\vol_g\big(B_g(p, 1)\big) \ge v_0 \quad \text{ for all }  p\in M \qquad \text{ and} \quad  \Rm_g + \eps \I \in \mc C,
\eec
 for some $\eps \in [0, 1]$. Then the Ricci flow $g(t)$ with initial metric $g$ exists until time $\tau$, is K\"ahler if $(M,g)$ is K\"ahler, and we have the curvature bounds
\bec \lb{curv_bdd_thm2}
\Rm_{g(t)} +C \eps  \, \I  \in \mc C\quad \text{
 and } \quad |\Rm_{g(t)}| \le \frac{C}{t} \quad \text{ for all } t\in (0,\tau].
\eec 
\end{mainthm}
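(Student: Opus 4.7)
The plan is to prove Theorem~\ref{thm:ann_gral} via a contradiction-and-compactness argument, invoking the classical Ricci-flow invariance of each cone $\mathcal{C}$ in (1)--(5).

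For the short-time existence and the bound $|\Rm_{g(t)}| \le C/t$, case (1) is exactly Theorem~\ref{thm:annco}. For cases (2)--(5) the cone condition $\Rm_g + \eps\I \in \mathcal{C}$ does not directly imply a lower eigenvalue bound on $\Rm_g$, so one cannot reduce naively to (1); instead one re-runs the argument behind Theorem~\ref{thm:annco} in parallel, using the weak positivity of $\Rm_g + \eps\I \in \mathcal{C}$ together with non-collapsing to obtain uniform existence and the decay estimate. In case (5), the K\"ahler property is preserved by Hamilton's classical theorem.

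The almost-preservation $\Rm_{g(t)} + C\eps\I \in \mathcal{C}$ is then proved by contradiction. Assume it fails and take sequences $(M_i^n, g_i)$ satisfying \eqref{hyp_thm2} with $\eps_i \in [0,1]$, together with points $(p_i, t_i) \in M_i \times (0, \tau]$ and constants $C_i \to \infty$ such that $\Rm_{g_i(t_i)}(p_i) + C_i \eps_i \I \notin \mathcal{C}$. The estimate $|\Rm_{g_i(t)}| \le C/t$, Shi's derivative bounds, and non-collapsing allow us to extract, by Hamilton's compactness theorem, a pointed smooth subsequential limit $(M_\infty, g_\infty(t), p_\infty)$ on $(0, \tau]$, with only Gromov--Hausdorff convergence as $t \downarrow 0$. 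After further extraction $t_i \to t_\infty$, $\eps_i \to \eps_\infty$, and $C_i \eps_i \to L \in [0, \infty]$.

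If $\eps_\infty > 0$, then $L = \infty$; dividing the failing relation by $C_i \eps_i$ and using $|\Rm_{g_i(t_i)}| \le C/t_i$ (reducing first to $t_\infty > 0$, since the statement at $t = 0$ is the hypothesis), we pass to the limit to obtain $\I \notin \mathcal{C}$, contradicting the fact that $\I$ lies in the interior of each of the cones in (1)--(5). Thus $\eps_\infty = 0$. In this case, a short-time smoothing argument shows that the limit flow satisfies $\Rm_{g_\infty(t)} \in \mathcal{C}$ for small $t > 0$, and the classical Ricci-flow invariance of $\mathcal{C}$---Hamilton and B\"ohm--Wilking for (1)--(2), Brendle--Schoen and Nguyen for (3), Brendle--Wilking for (4), and Mok for (5)---propagates this to $t = t_\infty$. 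Combined with the fact that $L \I \in \mathcal{C}$ for $L \in [0, \infty)$ (since $\mathcal{C}$ is a closed convex cone and $\I$ lies in its interior), this delivers the desired contradiction.

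The main obstacle is the short-time smoothing step, which transfers the asymptotic initial condition $\Rm_{g_i(0)} \in \mathcal{C}$ (modulo $\eps_i \to 0$) to $\Rm_{g_\infty(t)} \in \mathcal{C}$ for small $t > 0$: the convergence at $t = 0$ is only Gromov--Hausdorff, so the initial curvatures do not literally pass to the limit. I would address this either by a pseudolocality-type estimate giving smooth convergence on parabolic neighborhoods of suitably controlled initial points, or, equivalently, by running a direct tensorial maximum-principle argument \`a la B\"ohm--Wilking: evolve $\Rm_{g(t)} + F(t) \eps \I$ and choose $F(t)$ so that Hamilton's ODE inequality $Q(\Rm + F(t)\eps\I) + F'(t)\eps\I$ points strictly into $\mathcal{C}$ at boundary points, which yields the claim with $C = F(\tau)$.
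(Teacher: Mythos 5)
Your proposal diverges substantially from the paper's proof, and the divergence hides a genuine logical gap. In the paper, the cone preservation, the $|\Rm_{g(t)}|\le C/t$ decay, and short-time existence are not three separable facts: they are obtained together by a single continuity argument. One runs the flow on the maximal interval $[0,t_1)$ on which $\ell\le 1$ (where $\ell$ is defined as in \eqref{def_ell}) and the volume bound hold; on that interval the $\kappa$-solution blow-up gives $|\Rm|\le C/t$ (using Lemma~\ref{lem:ancient} for $S_2,S_4,S_5$ and Ni's theorem in the K\"ahler case instead of Perelman \cite[11.4]{P1}); and one then feeds $|\Rm|\le C/t$ into the heat-kernel estimate of Proposition~\ref{thm:heat} to show $\ell\le C\eps\le 1/2$, which extends the interval. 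You propose instead to first establish existence and $C/t$ decay ``by re-running the argument of Theorem~\ref{thm:annco}'' and only afterwards prove $\Rm_{g(t)}+C\eps\I\in\mathcal{C}$ by a compactness-and-contradiction argument. But ``the argument of Theorem~\ref{thm:annco}'' produces precisely the almost-preservation as part of the continuity scheme; you cannot run it to get $C/t$ while deferring the cone estimate, because the blow-up step needs $\ell$ bounded and the heat-kernel step is what keeps it bounded. Your contradiction argument is therefore either redundant (if you really have re-run the paper's argument) or built on a foundation that has not been established.

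The deeper problem is your proposed resolution of what you correctly identify as ``the main obstacle'': passing the approximate initial cone condition through the merely Gromov--Hausdorff convergence at $t=0$. You suggest a tensorial maximum-principle argument, choosing $F(t)$ so that $Q(\Rm+F(t)\eps\I)+F'(t)\eps\I$ points into $\mathcal{C}$ on $\partial\mathcal{C}$. This is exactly the strategy the paper explains does not work here: by \eqref{eq_Rm_I_minus_Rm_Q}, $Q(\Rm+\ell\I)$ contains a term $\ell\,\Ric\owedge\id$ of size $\sim\scal\cdot\ell$, so any $F$ would have to grow like $\exp\bigl(\int_0^t\scal\,ds\bigr)$, which diverges when $\scal\sim C/t$. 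In dimensions $n\ge 3$ the family of translated cones $\{\Rm:\Rm+\eps\I\in\mathcal{C}\}$ with $\eps>0$ is simply not preserved by Hamilton's ODE, as the paper notes in the introduction, and this failure is the entire reason the heat-kernel/integral approach of Section~\ref{sec_hk} (with the cancellation obtained by proving the coefficient in front of $\scal\,\ell$ in \eqref{eq_ell_lambda_claim} is exactly $1$) was developed. A pseudolocality estimate also does not resolve this, since it controls only the norm of curvature near $t=0$, not membership in a cone. Without a valid mechanism for this step, your case $\eps_\infty=0$ cannot be closed, and the whole contradiction argument collapses. I would recommend abandoning the compactness route and following the paper's structure: verify \eqref{cross_term} (Proposition~\ref{lem:ell}) for each $S_i$ and in the K\"ahler case, and re-run the continuity argument of Theorem~\ref{thm:annco} verbatim with the appropriate $\ell$ or $\tilde\ell$.
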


We remark that in case (1) we recover Theorem~\ref{thm:annco}.

Theorems~\ref{thm:annco} and \ref{thm:ann_gral} imply a variety of smoothing and gap results.
As a first application we show that volume non-collapsed closed manifolds that satisfy certain almost non-negative curvature conditions also admit metrics that satisfy the corresponding strict condition.

\begin{maincor} \label{anco_nnc}
Given $n \in \mathbb N$ and positive constants $D, v_0$, there exists a constant $\eps = \eps(n, v_0, D) >0$ such that the following holds.

Let $\mc C$ be one of the curvature conditions listed in items $(1)-(5)$ of Theorem \ref{thm:ann_gral}. 
Then any closed Riemannian manifold $(M^n,g)$ with 
$$\diam_g(M) \le D, \qquad  
\vol_g(M) \ge v_0 \quad \text{ and } \qquad \Rm_g + \eps \, \I \in \mc C$$ also admits a metric whose curvature operator lies in $\mc C$. 

(Note that in the case (5), we don't claim that the new metric is K\"ahler with respect to the complex structure for which $(M, g)$ is K\"ahler.)
\end{maincor}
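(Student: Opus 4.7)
The plan is a contradiction argument: use Theorem~\ref{thm:ann_gral} to smooth the given metric for a uniform time, then pass to a subsequential Cheeger--Gromov limit in which the open curvature hypothesis $\Rm+\eps\I\in\mc C$ closes up to produce a nearby metric actually in $\mc C$.

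Suppose the statement fails: then for some fixed $\mc C$ from the list there exist $\eps_i\downarrow 0$ and closed $(M_i^n,g_i)$ with $\diam_{g_i}(M_i)\le D$, $\vol_{g_i}(M_i)\ge v_0$ and $\Rm_{g_i}+\eps_i\I\in\mc C$, yet no metric on $M_i$ has curvature operator in $\mc C$. Each of the conditions (1)--(5) forces $\Ric\ge 0$ on curvature operators lying in $\mc C$, so the hypothesis gives $\Ric_{g_i}\ge -(n-1)\eps_i g_i\ge -(n-1)g_i$. Combined with $\diam_{g_i}\le D$ and $\vol_{g_i}\ge v_0$, Bishop--Gromov volume comparison yields a uniform unit-ball non-collapsing
\[
\vol_{g_i}\big(B_{g_i}(p,1)\big)\;\ge\;v_0'=v_0'(n,v_0,D)>0
\]
for every $p\in M_i$. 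For $i$ large enough that $\eps_i\le 1$, I would then apply Theorem~\ref{thm:ann_gral} with parameter $v_0'$ to obtain a Ricci flow $g_i(t)$ on $[0,\tau]$ with $|\Rm_{g_i(\tau)}|\le C/\tau$ and $\Rm_{g_i(\tau)}+C\eps_i\I\in\mc C$, where $\tau=\tau(n,v_0')$ and $C=C(n,v_0')$ are independent of $i$.

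Next I would extract a Cheeger--Gromov limit at time $\tau$. The bound $\Rm_{g_i(t)}+C\eps_i\I\in\mc C$ supplied by Theorem~\ref{thm:ann_gral} again forces $\Ric_{g_i(t)}\ge -c$ along the flow, so standard distance distortion gives a uniform $\diam_{g_i(\tau)}(M_i)\le e^{c\tau}D$. Perelman's $\kappa$-non-collapsing, applied on $[\tau/2,\tau]$ where the curvature is uniformly bounded by $2C/\tau$, propagates the initial non-collapsing at scale $1$ to a uniform volume bound $\vol_{g_i(\tau)}(B_{g_i(\tau)}(p,r_0))\ge\kappa r_0^n$ at a controlled scale, so Cheeger's lemma delivers a uniform injectivity radius bound at time $\tau$. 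Hamilton's compactness theorem then produces a subsequential smooth Cheeger--Gromov limit to a closed $(M_\infty,g_\infty)$; in case (5) the pulled-back complex structures converge along a further subsequence to a $J_\infty$ making $g_\infty$ K\"ahler. Since $\mc C$ is a closed cone and is preserved under smooth convergence of curvature tensors, letting $\eps_i\to 0$ in $\Rm_{g_i(\tau)}+C\eps_i\I\in\mc C$ yields $\Rm_{g_\infty}\in\mc C$. The convergence provides diffeomorphisms $\phi_i\colon M_\infty\to M_i$ for $i$ large, and the pushforward $(\phi_i)_*g_\infty$ is a metric on $M_i$ whose curvature operator lies in $\mc C$ (K\"ahler with respect to $(\phi_i)_*J_\infty$ in case (5), though not necessarily with respect to the original complex structure, in accord with the parenthetical in the statement). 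This contradicts the choice of the sequence.

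The main obstacle I anticipate is the uniform non-collapsing of $g_i(\tau)$: because the bound $|\Rm_{g_i(t)}|\le C/t$ diverges at $t=0$, naive distance or volume distortion integrated from $t=0$ breaks down, and Perelman's no-local-collapsing must be localized to the subinterval $[\tau/2,\tau]$ of uniformly bounded curvature to transport the initial unit-scale non-collapsing forward to time $\tau$. Everything else is a routine Ricci-flow smoothing and Cheeger--Gromov limit argument built on top of Theorem~\ref{thm:ann_gral}.
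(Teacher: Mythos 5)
Your proposal is correct and follows essentially the same contradiction--smooth--Cheeger--Gromov strategy as the paper (which extracts a limiting Ricci flow on $(0,\tau]$ rather than a single time-slice limit, but this is immaterial). The only technical divergence is in how you propagate non-collapsedness to time $\tau$: the paper leans on the Simon-style distance-distortion argument already used in the proof of Theorem~\ref{thm:annco}, whereas you invoke Perelman's $\kappa$-non-collapsing; the latter works but requires the extra (standard) observation that the diameter, volume, and Ricci lower bound at $t=0$ give a uniform lower bound on the initial $\nu$-functional across the sequence.
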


It will be clear from the proof that the metric whose existence is asserted in Corollary~\ref{anco_nnc} is close to the original metric $g$ in the Gromov-Hausdorff sense.

Related to the previous remark, we also obtain the following smoothing result for singular limit spaces of sequences of manifolds with lower curvature bounds.

\begin{maincor} \label{anco_smooth}
Let $\mc C$ be as in Corollary \ref{anco_nnc} and $(X, d_X)$ be the Gromov-Hausdorff limit of a sequence $\{(M_i, g_i)\}_{i=1}^\infty$ of closed Riemannian manifolds satisfying 
\[ %\lb{seq_cond}
\vol_{g_i}(M_i) \geq v_0, \qquad \Rm_{g_i} +  \eps_i \, \I \in \mc C, \qquad {\rm diam}_{g_i}(M_i) \leq D.\]
for some sequence $ \{\eps_i\} \subset (0, 1]$ with $\eps_i \to \eps_\infty$, as $i \to \infty$. Then there exists $\tau = \tau(n, v_0) > 0$, a smooth manifold $M_\infty$ and a smooth solution to the Ricci flow $(M_\infty, g_\infty(t))_{t \in (0, \tau)}$  which satisfies $\Rm_{g_\infty(t)} + \eps_\infty \I \in \mc C$ and is coming out of the (possibly singular) space $(X, d_X)$ in the sense that
\bec \lb{init_sing}
\lim_{t \searrow 0} d_{GH}\big((X, d_X), (M_\infty, d_{g_\infty(t)})\big) = 0.
\eec
In particular, for $\eps_\infty = 0$ the limiting $g_\infty(t)$ satisfies
the corresponding non-negative curvature condition $\mc C$ for all $t \in (0, \tau)$. 

Moreover, for any choice of $\eps_\infty$, the space $X$ is homeomorphic to the manifold $M_\infty$ and the Riemannian distance $d_{g_\infty(t)}$ converges uniformly to a distance function $d_0$ on $M_\infty$ as $t \searrow 0$ such that $(M_\infty, d_0)$ is isometric to $(X,d_X)$.
\end{maincor}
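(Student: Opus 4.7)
The strategy is to apply Theorem~\ref{thm:ann_gral} to every element of the sequence, extract a subsequential smooth limit of the resulting Ricci flows via Hamilton's compactness, and use distance distortion estimates to pass to the limit $t \searrow 0$ and identify the limiting metric space with $X$.

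First, I would verify the hypotheses of Theorem~\ref{thm:ann_gral} for each $(M_i, g_i)$. All of the conditions (1)--(5) are at least as strong as non-negativity of Ricci curvature, so $\Rm_{g_i} + \eps_i \I \in \mc C$ with $\eps_i \le 1$ implies the uniform lower bound $\Rc_{g_i} \ge -(n-1)\,g_i$. Combined with $\diam_{g_i}(M_i) \le D$ and $\vol_{g_i}(M_i) \ge v_0$, Bishop--Gromov gives uniform local non-collapsing $\vol_{g_i}(B_{g_i}(p,1)) \ge v_0'(n,D,v_0) > 0$ for all $p \in M_i$. Theorem~\ref{thm:ann_gral}, applied with $v_0'$ in place of $v_0$, then produces Ricci flows $(M_i, g_i(t))_{t \in [0,\tau]}$ satisfying the preservation bound $\Rm_{g_i(t)} + C \eps_i \I \in \mc C$ and the smoothing bound $|\Rm_{g_i(t)}| \le C/t$, with $\tau, C$ depending only on $n, v_0, D$.

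Next, pick base points $p_i \in M_i$ so that $(M_i, g_i, p_i)$ converges to some $(X, d_X, x_\infty)$ in the pointed Gromov--Hausdorff sense. The bound $|\Rm_{g_i(t)}| \le C/t$ together with Shi's derivative estimates gives uniform $C^k$-bounds on compact subintervals of $(0,\tau)$, and non-collapsing at every positive time is inherited from the initial one by standard distance/volume comparison under bounded curvature. Hamilton's compactness theorem then yields, up to extraction of a subsequence, a smooth limit Ricci flow $(M_\infty, g_\infty(t), p_\infty)_{t \in (0,\tau)}$. As will follow from the distortion estimates below, the diameter of $(M_i, g_i(t))$ is uniformly bounded, so $M_\infty$ is closed. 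The curvature condition passes to the limit: $\Rm_{g_\infty(t)} + C \eps_\infty \I \in \mc C$, which is the asserted bound up to relabeling the constant (and becomes the strict condition $\Rm_{g_\infty(t)} \in \mc C$ when $\eps_\infty = 0$).

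The main technical obstacle is to produce \eqref{init_sing} and the isometric identification $(M_\infty, d_0) \cong (X, d_X)$. The key tools are two distance distortion estimates along each $g_i(t)$: the Ricci lower bound $\Rc_{g_i(t)} \ge -C$ gives the forward inequality $d_{g_i(t)} \le e^{C(t-s)} d_{g_i(s)}$, while the Hamilton--Perelman distance distortion lemma applied with $|\Rm_{g_i(t)}| \le C/t$ yields $d_{g_i(t)} \ge d_{g_i(s)} - C'(\sqrt t - \sqrt s)$ for $0 \le s \le t \le \tau$. Together these furnish the uniform estimate
\[
d_{GH}\bigl((M_i, g_i(t)), (M_i, g_i(0))\bigr) \le C \sqrt t,
\]
which, combined with the smooth (hence Gromov--Hausdorff) convergence $(M_i, g_i(t)) \to (M_\infty, g_\infty(t))$ at each $t > 0$ and the hypothesis $(M_i, g_i(0)) \to (X, d_X)$, delivers \eqref{init_sing}. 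Applying the same two estimates inside the limit flow and letting $s \to 0$ shows that $d_{g_\infty(t)}$ is Cauchy as $t \to 0$, and produces the two-sided sandwich
\[
e^{-Ct} d_{g_\infty(t)} \le d_0 \le d_{g_\infty(t)} + C' \sqrt t,
\]
so the uniform limit $d_0$ is a non-degenerate metric that induces the smooth manifold topology of $M_\infty$. Finally, passing to the limit $t_k \searrow 0$ in the Gromov--Hausdorff closeness between $(M_\infty, d_{g_\infty(t_k)})$ and $(X, d_X)$ yields an isometry $(M_\infty, d_0) \cong (X, d_X)$ and hence the claimed homeomorphism $X \cong M_\infty$.
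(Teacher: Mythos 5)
Your argument follows essentially the same route as the paper's: apply Theorem~\ref{thm:ann_gral} to each $(M_i,g_i)$ (after converting the global volume bound into a local one via Bishop--Gromov), pass to a limit Ricci flow via Hamilton compactness, and use the two-sided distance distortion estimates coming from the Ricci lower bound and the $|\Rm_{g(t)}|\le C/t$ bound to obtain \eqref{init_sing}, the existence of $d_0$, and the homeomorphism $X\cong M_\infty$; the paper is terser, citing Simon for the Gromov--Hausdorff continuity and spelling out the identity-map continuity argument for the homeomorphism. You are also right to note that this line of reasoning only yields $\Rm_{g_\infty(t)} + C\eps_\infty\,\I\in\mc C$ with the constant $C=C(n,v_0,D)$ from Theorem~\ref{thm:ann_gral}, rather than the constant $1$ that appears in the corollary's statement.
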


By taking convergent sequences of manifolds as above one can generate a large variety of singular spaces that can be smoothed out by the Ricci flow with lower curvature bound.
Unlike in a recent result of Giannotis and Schulze (see \cite{GS}), these singularities do not need to be isolated and conical. On the other hand, the result in \cite{GS} requires no uniform lower bound on the curvature operator, and hence does not follow from ours.

In the case (5), Corollary \ref{anco_smooth} implies a statement that is similar to a result of Gang Liu (cf.~\cite{Liu}) on the structure of limits of spaces whose bisectional curvature is uniformly bounded from below. 
We thank Zhenlei Zhang for pointing out this application to us.

We will also establish a local version of Theorem~\ref{thm:ann_gral} in the case of non-negative curvature operator and non-negative complex sectional curvature.

\begin{mainthm}\label{anco_local} 
Given  $n \in \mathbb N$, $\eps \in [0, 1]$ and $v_0 >0$ there are constants $\tau = \tau(n, v_0, \eps) >0$ and $C = C(n, v_0) > 0$ such that the following holds.

Let $\mc C$ be the curvature conditions listed in item $(1)$ or $(3)$ of Theorem~\ref{thm:ann_gral}.
Let $(M^n,g)$ be any Riemannian manifold (not necessarily complete) and consider an open subset $U \subset M$ and $r >0$ satisfying

\begin{enumerate}[label=(\arabic*)]
\item[{\rm (1)}] The $r$-tubular neighborhood around $U$, $B_r(U)$ is relatively compact in $M$.
\item[{\rm (2)}] $\Rm_g  + \frac{\varepsilon}{r^2} \I \in \mc C$ on  $B_r(U)$.
\item[{\rm (3)}] $\vol_g\big(B_g(x, r)\big) \ge v_0 r^n$ for every $x \in U$
\end{enumerate}
Then there is an (incomplete) Ricci flow $(g(t))_{t \in [0,\tau r^2)}$ on $U$ with initial metric $g$ and we have the curvature bounds
 $$\Rm_{g(t)}(x) + C\cdot \frac{\eps}{r^2} \, \I  \in \mc C \qquad \text{for all} \quad x \in  U, \ t \in [0, \tau r^2)$$
 and $$|\Rm_{g(t)}| \le \frac{C}{t} \quad \text{on} \quad U \qquad \text{ for all } \quad t\in (0,\tau r^2).$$  
\end{mainthm}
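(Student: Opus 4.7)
The plan is to derive Theorem~\ref{anco_local} from the global Theorem~\ref{thm:ann_gral} by an approximation argument. First, I would exploit the parabolic rescaling invariance of both the Ricci flow equation and of the curvature conditions (1) and (3): rescaling $g \mapsto r^{-2} g$ transforms the hypotheses so that the shifted curvature lower bound becomes $\Rm + \eps\,\I \in \mc C$ on $B_1(U)$ and the volume bound becomes $\vol(B(x,1)) \geq v_0$ for $x \in U$. Hence we may assume $r=1$ throughout, and the claim reduces to producing an incomplete Ricci flow on $U$ for a definite time with the stated curvature estimates.

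The core strategy is to realize the desired Ricci flow on $U$ as a Cheeger--Gromov--Hamilton limit of complete Ricci flows produced by the global theorem. More precisely, I would construct a sequence of complete Riemannian manifolds $(\widetilde M_i, \widetilde g_i)$ of bounded curvature such that
(i) $\widetilde g_i$ isometrically contains an exhausting sequence of open sets $U_i \subset U$ with $U_i \nearrow U$;
(ii) $\Rm_{\widetilde g_i} + C_0 \eps\, \I \in \mc C$ globally on $\widetilde M_i$ for a universal constant $C_0 = C_0(n)$; and
(iii) $\vol_{\widetilde g_i}(B_{\widetilde g_i}(p,1)) \geq v_0/2$ uniformly in $p$ and $i$.
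Applying Theorem~\ref{thm:ann_gral} to each $(\widetilde M_i, \widetilde g_i)$ produces Ricci flows $\widetilde g_i(t)$ for $t \in [0,\tau]$ (with $\tau=\tau(n,v_0)$) satisfying $\Rm_{\widetilde g_i(t)} + C\eps\,\I \in \mc C$ and $|\Rm_{\widetilde g_i(t)}| \leq C/t$. The uniform curvature bound for $t > 0$ together with the non-collapsing assumption allows one to extract a subsequential Hamilton-type limit $(M_\infty, g_\infty(t))_{t \in (0,\tau]}$; since the initial metrics coincide with $g$ on $U_i$ and $U_i \nearrow U$, this limit can be identified with a Ricci flow on $U$ carrying the desired curvature properties.

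The principal obstacle is step (ii) in the construction of the approximating metrics: one must extend $g|_{B_1(U)}$ to a complete metric of bounded curvature without letting the curvature drift far outside the cone $\mc C - C_0 \eps\,\I$. Conditions such as non-negative curvature operator are rigid and easily destroyed by naive cut-and-paste extensions. I would carry out the extension by interpolating, in a thin collar inside $B_1(U) \setminus B_{1 - 1/i}(U)$, between $g$ and a suitably chosen model metric (for instance a scaled round sphere or a warped product that smoothly caps off the manifold), using cutoff functions, and exploiting that $\mc C$ is a convex cone and, for condition (3), stable under Cartesian products with Euclidean factors, so that interpolation through tensors inside $\mc C$ can be performed without leaving the cone up to a controlled loss. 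The geometry of $B_1(U)$ is controlled (via Bishop--Gromov-type inequalities applied to the lower curvature bound) so the collar construction is feasible. The dependence $\tau = \tau(n, v_0, \eps)$ in the statement---weaker than the $\eps$-independent existence time in Theorem~\ref{thm:ann_gral}---presumably absorbs the boundary-layer losses incurred during this interpolation step, while the output constant $C = C(n,v_0)$ matches that of the global theorem because the final estimates are extracted on the interior $U$, far from the boundary.
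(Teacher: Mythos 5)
Your high-level plan --- rescale to $r=1$, embed $B_1(U)$ into a complete manifold of bounded curvature that still satisfies a shifted curvature condition, apply Theorem~\ref{thm:ann_gral}, then restrict to $U$ --- is indeed the right one and is what the paper does. However, the two technical steps where your argument would have to do actual work are both problematic.

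First, the extension step. You assert in item (ii) that one can extend $g|_{B_1(U)}$ to complete metrics satisfying $\Rm + C_0\eps\,\I \in \mc C$ \emph{globally}, by interpolating in a collar between $g$ and a model cap. This is not achievable. The proposed interpolation $\phi\, g + (1-\phi)\, g_{\rm model}$ does not interpolate curvature tensors: the curvature of the interpolated metric involves second derivatives of $\phi$, which are not controlled at scale $\eps$ by anything in the hypotheses; and the model cap itself has curvature of order $1$, not $\eps$. The appeal to convexity of $\mc C$ does not help here, since convexity is a statement about interpolating \emph{curvature operators}, not metrics. Any extension or completion of $B_1(U)$ will generically introduce curvature of order $1$ in the transition region, so the best one can hope for from the completed manifold is $\Rm + (C(n)+\eps)\,\I \in \mc C$. (The paper achieves exactly this, not the stronger claim in your item (ii), via a conformal blow-up $\hat g = \Phi^2 g$ with $\Phi = (1-\varphi\circ\rho)^{-1}$ that pushes the boundary of a smoothed sublevel set of $d_U^2$ to infinity; the conformal change formula gives $\Rm_{\hat g} = e^{2f}(\Rm_g - A\owedge g)$ for a tensor $A$ controlled by the smoothed distance function, and $A\owedge g$ can be absorbed into a bounded multiple of $\I_{\hat g}$.)

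Second, and more seriously, your proposal has no mechanism to improve the curvature bound from $C(C(n)+\eps)$ back down to $C\eps$ on $U$. After the global theorem, one only controls $\hat\ell(\cdot,t) \leq C_0(C(n)+\eps)$ on the whole completed manifold, which is far from the claimed $\Rm_{g(t)} + C\eps\,\I \in \mc C$ on $U$. The paper obtains the sharper bound on $U$ by a second application of the heat kernel estimate of Proposition~\ref{thm:heat}: since $\hat\ell(\cdot,0) \leq \eps$ on $B_{1-1/2}(U)$ while $\hat\ell(\cdot,0) \leq C(n)+\eps$ on the complement, the Gaussian decay forces the contribution of the far region to $\hat\ell(x,t)$ for $x\in U$ to be $\leq C_1\eps$ once $t \leq t_1(n,\eps)$. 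This is precisely why $\tau$ depends on $\eps$ in the statement --- not, as you speculate, because of ``boundary-layer losses in the interpolation.'' Without this localization step, the theorem as stated does not follow from your construction even if the extension could be made to work.
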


By applying this result to a sequence of larger and larger balls, we obtain the following short-time existence result on complete manifolds with possibly unbounded curvature:

\begin{mainthm}\label{ste_new} 
Given $n \in \mathbb N$, $\eps \in [0, 1]$ and a constant $v_0 >0$, there exist 
positive constants $C = C(n, v_0) > 0$ and $\tau = \tau(n, v_0) > 0$  such that the following holds.

Let $\mc C$ be the curvature conditions listed in item $(1)$ or $(3)$ of Theorem~\ref{thm:ann_gral}.
Let $(M^n,g)$ be any complete Riemannian manifold satisfying \eqref{hyp_thm2}.  Then there exists a complete Ricci flow $(M, g(t))_{t \in [0, \tau)}$ with $g(0) = g$ and so that the curvature bounds in \eqref{curv_bdd_thm2} hold.
\end{mainthm}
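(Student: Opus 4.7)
The approach is to apply Theorem \ref{anco_local} on an exhaustion of $M$ by relatively compact balls around a fixed basepoint, and then extract a smooth subsequential limit that becomes the desired complete Ricci flow.

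Fix a basepoint $p_0 \in M$. For each $R > 1$, set $U_R := B_g(p_0, R-1)$ and $r := 1$. The three hypotheses of Theorem \ref{anco_local} are immediate from those of Theorem \ref{ste_new}: the ball $B_1(U_R) = B_g(p_0, R)$ is relatively compact by completeness of $(M,g)$; the pointwise condition $\Rm_g + \eps \I \in \mc C$ on $B_g(p_0, R)$ is just \eqref{hyp_thm2} restricted to this ball; and $\vol_g(B_g(x,1)) \ge v_0$ for all $x \in U_R$ is the global non-collapsing assumption. Applying Theorem \ref{anco_local} therefore produces incomplete Ricci flows $(g_R(t))_{t \in [0,\tau_R)}$ on $U_R$ with $\Rm_{g_R(t)} + C\eps \I \in \mc C$ and $|\Rm_{g_R(t)}| \le C/t$ throughout. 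A monotonicity check (the $\eps$-dependent existence time in Theorem \ref{anco_local} is non-increasing in $\eps$, since a smaller $\eps$ means a stronger curvature lower bound) then gives $\tau_R \ge \tau(n,v_0,1) =: \tau(n, v_0) > 0$, a uniform lower bound on the existence interval depending only on $n$ and $v_0$.

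Next I would pass to the limit as $R \to \infty$. On any compact $K \subset M$ and any subinterval $[\delta, \tau(n,v_0) - \delta]$, the estimate $|\Rm_{g_R(t)}| \le C/\delta$ is uniform in $R$. Combined with the unit-scale initial non-collapsing and Perelman's $\kappa$-non-local-collapsing theorem along the flow, this provides uniform lower bounds on the injectivity radius at scales $\sim \sqrt{\delta}$. Shi's derivative estimates then yield uniform $C^k$ bounds for every $k$, so Hamilton's compactness theorem produces a subsequential smooth limit $(g_\infty(t))_{t \in (0, \tau(n, v_0))}$ on all of $M$ (as the $U_R$ exhaust $M$), inheriting the two curvature bounds of \eqref{curv_bdd_thm2}.

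It remains to identify the initial metric with $g$ and to show that $(M, g_\infty(t))$ is complete for each $t > 0$. Both follow from a Perelman-type distance distortion estimate which, under the bound $|\Rm_{g_R(t)}| \le C/t$, gives
\[ |d_{g_R(t)}(x,y) - d_g(x,y)| \le C' \sqrt{t} \]
for $x, y$ in any fixed bounded region, uniformly in $R$. Passing to the limit, $d_{g_\infty(t)} \to d_g$ uniformly on compact subsets as $t \searrow 0$, so $g_\infty(0) = g$; and any $d_{g_\infty(t)}$-Cauchy sequence is $d_g$-Cauchy up to a uniform additive error, hence convergent by completeness of $(M,g)$.

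The main obstacle I foresee is establishing uniform non-collapsing on the positive-time slices without any curvature bound at $t=0$: the standard Perelman argument requires a scale-invariant volume lower bound, which here has to be transferred from the unit-scale initial hypothesis via the same distance distortion estimate invoked above. Once non-collapsing is in place, the compactness and convergence arguments are routine adaptations of the usual Hamilton framework.
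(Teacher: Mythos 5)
Your overall strategy — exhaust $M$ by balls, apply Theorem~\ref{anco_local}, and extract a limit — is the same as the paper's. However, there are two real problems.

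\textbf{The main gap: attaching $g$ smoothly at $t=0$.} Your limit extraction uses the bound $|\Rm_{g_R(t)}| \leq C/t$ on $[\delta, \tau-\delta]$, which only produces a smooth flow $g_\infty(t)$ on the \emph{open} interval $(0,\tau)$. You then claim $g_\infty(0)=g$ from the distance estimate $|d_{g_\infty(t)}-d_g|\leq C'\sqrt{t}$. But convergence of distance functions is a metric-space statement; it does not give $C^\infty$ (or even $C^0$) convergence of the metric tensors, and in particular does not produce a smooth Ricci flow on the closed interval $[0,\tau)$ with initial condition $g$, as the theorem asserts. This is exactly the step the paper handles differently: it invokes Perelman's pseudolocality theorem (applied to the complete flows produced by the construction inside Theorem~\ref{anco_local}) to obtain \emph{local} uniform curvature bounds on a neighborhood of $M\times\{0\}$, independent of the exhaustion parameter, and then applies Shi's local derivative estimates to pass to a limit that is smooth all the way down to $t=0$. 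Without a pseudolocality-type input your argument cannot reach $t=0$.

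\textbf{A secondary error: the monotonicity of $\tau(n,v_0,\eps)$.} You claim $\tau(n,v_0,\eps)$ from Theorem~\ref{anco_local} is non-increasing in $\eps$, so that $\tau(n,v_0,1)$ is a uniform lower bound. This is backwards. In the proof of Theorem~\ref{anco_local}, the $\eps$-dependence enters when bounding the boundary term $\frac{C_1}{t^{n/2}}\int_{V\setminus B_{1-1/2}}\exp(-d_{\hat g}^2/(C_1 t))\,d\mu_{\hat g}$ by $C_1\eps$; since this integral is of size $O(1)$ for fixed $t>0$ and decays only as $t\searrow 0$, the threshold time $t_1(n,\eps)$ for which the bound holds tends to $0$ as $\eps\to 0$. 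So $\tau(n,v_0,\eps)\to 0$ as $\eps\to 0$, and your reduction to $\tau(n,v_0,1)$ fails. The correct way to get an $\eps$-independent $\tau$ (as the paper claims) is to note that as $R\to\infty$ the modified region $V\setminus B_{1-1/2}(U_R)$ is pushed to infinite $\hat g$-distance from any fixed point, so this boundary term vanishes in the limit for all $t\in(0,\tau(n,v_0)]$; one must re-run the heat kernel improvement inside the limiting argument rather than apply Theorem~\ref{anco_local} as a black box.

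Finally, a small technical remark: the incomplete flows on $U_R$ output by Theorem~\ref{anco_local} are awkward for Hamilton compactness and for Perelman non-collapsing. The paper works instead with the \emph{complete} auxiliary flows $(V_i,g_i(t))$ that the proof of Theorem~\ref{anco_local} constructs (via the conformal change and Theorem~\ref{thm:ann_gral}), which removes these difficulties.
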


Theorem~\ref{anco_local} can be used to strengthen a result by Lott (see \cite[Proposition 1]{Lott_Ff}) on the geometry of locally volume collapsed manifolds with almost non-negative curvature operator.
We also mention that in the same reference, Lott asks whether each simply connected manifold with almost non-negative curvature operator is diffeomorphic to  a torus bundle over a compact symmetric space. 
 Corollary~\ref{anco_nnc} gives an affirmative answer to Lott's question in the non-collapsed case.

In dimension 3, Theorem~\ref{thm:ann_gral} and Corollaries~\ref{anco_nnc} and \ref{anco_smooth} were established by Simon in \cite{Simon09, Simon12} for the case of almost non-negative and 2-non-negative curvature operator, which in dimension 3 is equivalent almost non-negative sectional and Ricci curvature, respectively. Theorem~\ref{anco_local}  can be regarded as a higher dimensional version of the results proved by Simon in \cite{Simon13, Simon17} for dimensions 2 and 3, respectively. Note that even for low dimensions our result is new because we do not assume short-time
existence. Theorem \ref{ste_new} is in turn a generalization of the short time existence result in \cite{CRW} by relaxing the corresponding non-negative curvature condition.

Finally, let us explain the main idea of the proof of Theorem~\ref{thm:annco}; Theorem~\ref{thm:ann_gral} will follow similarly, modulo some technical details. 
Denote by $\ell$ the negative part of the smallest eigenvalue of $\Rm$ (see (\ref{def_ell})). In Theorem~\ref{thm:annco} we assume that $\ell \leq 1$ at time $0$.
By standard formulas, $\ell$ roughly satisfies an evolution inequality of the form
\begin{equation} \label{eq_ell_intro}
 \partial_t \ell \leq \Delta \ell + C_1 \scal \cdot \ell + C_2 \ell^2 .
\end{equation}

Traditionally, the invariance of a curvature condition is reduced to a pointwise invariance property via a maximum principle ({\sc ode-pde} comparison).
Unfortunately, this strategy only works for specific curvature conditions. 
Indeed, in dimensions $n \geq 3$, the bound $\Rm \geq \eps$ satisfies this pointwise invariance only if $\eps \geq 0$.
This is why in Theorem~\ref{thm:annco}, we cannot assert strict invariance of the lower bound on the curvature operator.

The degree to which this invariance fails at each point is measured by the reaction term $C_1 \scal \cdot \ell + C_2 \ell^2$ in \eqref{eq_ell_intro}.
Our goal will be to show that this failure is compensated by the diffusion of \eqref{eq_ell_intro}.
In other words, we will bound the influence of the reaction term on $\ell$ in an integral sense.
For example, if we consider the evolution of the integral of $\ell$, then we obtain
\begin{multline} \label{eq_dt_int_ell}
 \frac{d}{dt} \int_M \ell d\mu_t\leq \int_M \big( \Delta \ell + C_1 \scal \cdot \ell + C_2 \ell^2 - \ell \cdot \scal \big) d\mu_t \\
  =  (C_1 -1 )\int_M \scal \cdot \ell d\mu_t + C_2 \!\int_M \ell^2 d\mu_t . 
\end{multline}
Note that the $-\ell \cdot \scal$ term is generated by the distortion of the volume element.
A crucial step in our proof will be to show that we can choose $C_1 = 1$ in \eqref{eq_ell_intro}, which implies that the first term on the right-hand side of (\ref{eq_dt_int_ell}) vanishes.
So as long as $\ell$ remains bounded, its integral cannot grow too fast.
In section~\ref{sec_hk} we will generalize this principle and derive a Gaussian estimate for the heat kernel of the linearization of (\ref{eq_ell_intro}) under certain a priori assumptions.
This estimate will enable us to derive \emph{pointwise} estimates for $\ell$ by localizing (\ref{eq_dt_int_ell}).
Theorem~\ref{thm:annco} will then follow via a continuity argument (see section~\ref{sec_main_proof} for details)

Theorem \ref{anco_local} will follow from Theorem \ref{thm:ann_gral} by a suitable conformal change. Then a limiting argument gives Theorem \ref{ste_new} (see section~\ref{sec_loc_Thm}).

\section{Evolution inequalities for curvature quantities} \lb{algebraic}

\subsection{Background about algebraic curvature operators and preserved curvature conditions}

Consider $\{e_i\}_{i=1}^n$ an orthonormal basis of $\R^n$. Then the set $\{e_i \wedge e_j\}_{i < j}$ forms an orthonomal basis of $\bigwedge^2\R^n$ with respect to the canonical inner product given by
$$\<x \wedge y, z \wedge {\rm v}\> = \<x, z\>\<y, {\rm v}\> - \<x, {\rm v}\>\<y, z\> \qquad \text{for} \quad x,y, z, {\rm v} \in \R^n.$$
We identify $\bigwedge^2\R^n$ with $\so(n, \R)$ via the linear transformation determined by
$$e_i \wedge e_j \longmapsto \big(e_i \wedge e_j\big)_{k l}:= \delta_{ik} \delta_{j l} - \delta_{il} \delta_{jk}.$$

Let $S^2_B(\so(n))$ denote the set of algebraic curvature operators on $\R^n$. Every $\Rm \in  S^2_B(\so(n))$ is the symmetric bilinear form on $\so(n, \R)$ defined by
$$\Rm(e_i \wedge e_j, e_k \wedge e_l) = \Rm_{ijkl},$$
where the left hand side is the corresponding $(4, 0)$ tensor on $\R^n$. We extend each $\Rm \in S^2_B(\so(n))$ complex bi-linearly to a map $\Rm: \so(n,\C) \times \so(n,\C) \fle \C$. Here $\so(n,\C)$ is endowed with the natural Hermitian inner product
$$\<v, w\> = -\tr(v \bar{w})/2,$$
where $v \mapsto \bar v$ denotes complex conjugation. Recall that $\Rm(v, \bar v) \in \R$.

As explained in \cite{WLie}, many of the (known) curvature conditions that are preserved by the Ricci flow can be described by means of convex cones of the form
\begin{equation} \label{eq_def_CC}
 \mc C(S, h) = \big\{ \Rm \in S^2_B(\so(n)) \;\; \big| \;\; \text{$\Rm (v, \bar v) \geq h$ for all $v \in S$} \big\}, 
\end{equation}
where $S \subset \so (n, \C)$ is a subset that is invariant under the natural $SO(n, \C)$-action and $h \in \R$.
We will focus on the cones $\mc C(S) = \mc C(S, 0)$ for the following choices of $S$:

\def\arraystretch{1.3}
\begin{tabular}{|l|r|}
\hline
{\bf choice of $\bm{S = S_i}$} & $\bm{\mc C(S_i) = \{ \text{\bf  ... curvature operators}\}}$ \\
\hline
$S_1 =  \so (n, \C)$ & non-negative   \\
\hline 
$S_2 = \{ v \in \so (n, \C) \, | \, \tr (v^2) =0 \}$ & 2-non-negative  \\
\hline 
$S_3 = \{ v \in \so (n, \C) \, | \, \rank (v) = 2, v^2 = 0 \}$ &  weakly positive isotropic (PIC)  \\ \hline 
 $S_4 = \{ v \in \so (n, \C) \, | \, \rank (v) = 2, v^3 = 0 \}$ & weakly $\text{PIC}_1$  \\ \hline 
$S_5 = \{ v \in \so (n, \C) \, | \, \rank (v) = 2 \}$ & weakly $\text{PIC}_2/$non-negative complex  \\
\hline
\end{tabular}

\def\arraystretch{1}
\medskip

\noindent Preservation of 2-non-negative curvature was originally proved by H.~Chen \cite{Chen}. The invariance of weakly PIC was first showed in dimension four by Hamilton \cite{Ham97}; the general case was obtained independently by S.~Brendle and R.~Schoen \cite{BS} and by H.~T.~Nguyen \cite{Huy}. The invariant conditions weakly $\text{PIC}_1$ and $\text{PIC}_2$ were in turn introduced by Brendle and Schoen in \cite{BS} and play a key role in their proof of the differentiable sphere theorem.

Let $\mc C \subset S^2_B(\so(n))$ be one of the above curvature conditions and denote by $\I \in S^2_B(\so(n))$ the constant curvature operator of scalar curvature $n(n-1)$.
Note that for any $\Rm \in S^2_B(\so(n))$ we have $\Rm + \ell \, \I \in \mc C$ for sufficiently large $\ell$.
So, as $\mc C$ is closed, we can consider the smallest $\ell \geq 0$ for which $\Rm + \ell \, \I \in \mc C$.
Then, for example, in the case $S = S_1$ this $\ell$ is equal to the negative part of the smallest eigenvalue of $\Rm$.
Analogously, for a Ricci flow $(M, g(t))$ we define
\bec \lb{def_ell}
\ell(p, t):= \inf\{\alpha \in [0,\infty) \ | \  \Rm_{g(t)}(p) + \alpha \,  \I  \in \mc C\}.
\eec
The main goal of this section is to derive an evolution inequality for $\ell$ by using the evolution equation of the curvature operator $\Rm$  under the Ricci flow: 
\begin{equation} \label{eq_evolution_Rm_in_proof}
 \nabla_t \Rm = \Delta \Rm + 2 \, Q(\Rm), \qquad \text{where} \qquad Q(\Rm) := \Rm^2+ \Rm^\#,
\end{equation}
where $\Rm^\sharp(u, v):= -\tfrac1{2} \tr({\rm ad}_u \, \Rm \, {\rm \ad}_{v} \, \Rm)$ and $\nabla_t$ denotes the natural space-time extension of the Levi-Civita connection $\nabla^{g(t)}$ so that it is compatible with the metric i.e. $\partial_t |X|^2_{g(t)} = 2\<\nabla_t X, X\>_{g(t)}$.

\subsection{Riemannian case} 

Let $A$ and $B$ be symmetric bilinear forms on $\R^n$. The Kulkarni-Nomizu product $A \owedge B \in S^2_B(\so(n))$ is given by
\bec \lb{def_KN}
  (A \owedge B)_{ijkl} = A_{ik} B_{jl} + A_{jl} B_{ik} - A_{il} B_{jk} - A_{jk} B_{il}. 
\eec

For any $\Rm \in S^2_B(\so(n))$ we will denote by $\Ric = \Ric (\Rm)$ and $\scal = \scal (\Rm)$ the associated Ricci and scalar curvatures. From \cite[Lemma 2.1]{BW2} one can easily compute that
\begin{equation} \label{eq_Rm_I_minus_Rm_Q}
 Q(\Rm + \ell \, \I ) = Q (\Rm) +   \ell \Ric \owedge \id + (n-1) \ell^2 \I,
\end{equation}
where we write $\Ric \owedge \id$ to denote $\Ric \owedge g$ when $g_{ij} = \delta_{ij}$. Note that $\Ric \owedge \id = 2 \Ric \wedge \id$, where $\wedge$ is the wedge-product defined in \cite{BW2}. If we view elements $v \in \so (n, \C)$ as antisymmetric matrices with complex entries and the Ricci tensor $\Ric$ as a symmetric matrix with real entries, then
 \eqref{def_KN} implies
\bec \lb{Ric_id_vv}
(\Ric \owedge \id) (v, \bar v) =  R_{ij} v_{is} \bar v_{js} = -  \tr (\Ric \, v \, \bar v), 
\eec
where the last term denotes the trace of the product of three matrices.
The constant curvature operator $\I$ in turn satisfies 
\bec \lb{Ivv}
\I (v, \bar v) = \tfrac1{2}(\id \owedge \id)(v, \bar v) =  |v|^2.
\eec

 In the following lemma we will derive an evolution inequality for $\ell$ from \eqref{eq_evolution_Rm_in_proof} under an algebraic assumption on $\Ric \owedge \id$, which we will subsequently verify in the proof of the main result of this section, Proposition~\ref{lem:ell}.
\begin{lem} \label{lem_ell_equation}
Let $\mc C = \mc C(S)$ be a cone of the above form. Assume that there is a constant $\lambda \in [0, \infty)$ such that for any $\Rm \in \partial \mc C$ and $v \in S$ with $\Rm (v, \bar v) = 0$ we have
\bec \lb{cross_term} ( \Ric \owedge \id -  \tfrac{\scal}{2} \, \I)(v, \bar v) \leq   \lambda \sqrt{Q(\Rm) (v, \bar v)}. \eec
Then for any Ricci flow $(M, g(t))$ there is a constant $C \in (0, \infty)$ such that 
\begin{equation} \label{eq_ell_lambda_claim}
 \partial_t \ell \le \Delta \ell +  \scal \ell + C \ell^2
\end{equation}
holds in the following barrier sense: for any $(q, \tau) \in M \times (0, T)$
 we can find a neighborhood $\mc U \subset M \times (0, T)$ of $(q, \tau)$
and a $C^\infty$ (lower barrier) function $\phi:\mc U \fle \R$ such that $\phi \leq  \ell$ on $\mc U$, with equality at $(q, \tau)$ and
\begin{equation} \lb{eq_barrier}
 (\partial_t - \Delta) \phi  \leq \scal \ell + C \ell^2 \qquad \text{at } \quad (q, \tau).
\end{equation}
\end{lem}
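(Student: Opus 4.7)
The strategy is a Hamilton-type maximum principle: at the chosen point $(q,\tau)$ I would freeze a null direction of the extremal curvature operator $\tilde\Rm := \Rm_{g(\tau)}(q) + \ell(q,\tau)\,\I$, propagate it to a smooth spacetime section that stays in $S$, and use it to build a lower barrier $\phi$. If $\ell(q,\tau)=0$ the trivial choice $\phi\equiv 0$ works, so assume $\ell(q,\tau)>0$. Since $\mc C$ is closed, $\tilde\Rm\in\partial \mc C$, and by definition of $\mc C(S,0)$ in \eqref{eq_def_CC} there exists $v_0\in S$ with $|v_0|=1$ and $\tilde\Rm(v_0,\bar v_0)=0$; equivalently, $\Rm_{g(\tau)}(q)(v_0,\bar v_0)=-\ell(q,\tau)$. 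To obtain a smooth extension $v(p,t)$ I would invoke Uhlenbeck's trick: identify $(TM,g(t))$ with a fixed Euclidean bundle $E$ via an evolving isometry and let $v$ be the section corresponding to a constant extension of $v_0$ in a spacetime-parallel frame at $(q,\tau)$. Because $S\subset\so(n,\C)$ is $SO(n,\C)$-invariant the section $v$ automatically lies in $S$ throughout a neighborhood, so by \eqref{def_ell} the function
\[
\phi(p,t):=-\Rm_{g(t)}(p)(v(p,t),\bar v(p,t))\,/\,|v(p,t)|^2
\]
is smooth and satisfies $\phi\le\ell$, with equality at $(q,\tau)$.

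The next step is to compute $(\partial_t-\Delta)\phi$ at $(q,\tau)$. In the Uhlenbeck frame the pulled-back curvature operator obeys \eqref{eq_evolution_Rm_in_proof}, and since $v$ is constant in that frame all of $\nabla v$, $\Delta v$, $\nabla_t v$ and derivatives of $|v|^2$ vanish at $(q,\tau)$. Therefore
\[
(\partial_t-\Delta)\phi\big|_{(q,\tau)} = -2\,Q(\Rm_{g(\tau)}(q))(v_0,\bar v_0).
\]
It remains to bound the right-hand side by $\scal\cdot\ell+C\,\ell^2$. The identity \eqref{eq_Rm_I_minus_Rm_Q} evaluated at $v_0$, using $\I(v_0,\bar v_0)=1$ from \eqref{Ivv}, gives
\[
-Q(\Rm)(v_0,\bar v_0) = -Q(\tilde\Rm)(v_0,\bar v_0) + \ell\,(\Ric\owedge\id)(v_0,\bar v_0) + (n-1)\ell^2,
\]
and the preservation of $\mc C$ under the Ricci flow (Hamilton, Chen, Brendle--Schoen, Nguyen) applied to $\tilde\Rm\in\partial\mc C$ with null direction $v_0\in S$ yields $Q(\tilde\Rm)(v_0,\bar v_0)\ge 0$. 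Finally, I would feed the hypothesis \eqref{cross_term} into $\tilde\Rm$: substituting $\Ric(\tilde\Rm)=\Ric+\ell(n-1)\id$ and $\scal(\tilde\Rm)=\scal+\ell n(n-1)$ and using \eqref{Ric_id_vv} and \eqref{Ivv}, it translates into an upper bound of the shape
\[
(\Ric\owedge\id)(v_0,\bar v_0)\;\le\;\tfrac{\scal}{2} + c_n\,\ell + \lambda\sqrt{Q(\tilde\Rm)(v_0,\bar v_0)}.
\]
Inserting this in the identity above and maximizing $-s^2+\lambda\ell s$ in $s=\sqrt{Q(\tilde\Rm)(v_0,\bar v_0)}\ge 0$ (which contributes at most $\lambda^2\ell^2/4$) yields $-2Q(\Rm)(v_0,\bar v_0)\le\scal\cdot\ell+C\,\ell^2$ for some $C=C(n,\lambda)$, proving \eqref{eq_barrier}.

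The main technical point is the construction of $v$ in the first step: it has to stay in $S$, be smooth, have unit norm at $(q,\tau)$, and be sufficiently parallel in spacetime there that the full heat operator $(\partial_t-\Delta)$ applied to $\phi$ reduces cleanly to the corresponding derivatives of $\Rm$, with no residual contributions from $v$ or $|v|^2$. The Uhlenbeck trick combined with the $SO(n,\C)$-invariance built into the definition \eqref{eq_def_CC} of the cones is designed precisely for this, and is what makes the pointwise barrier argument uniformly available across the cones $\mc C(S_i)$ considered in Theorem~\ref{thm:ann_gral}.
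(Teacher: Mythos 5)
Your proposal is correct and follows essentially the same argument as the paper: locate a null direction $v_0\in S$ of the boundary operator $\Rm + \ell\,\I\in\partial\mc C$, extend it to a spacetime section whose first and second covariant derivatives vanish at $(q,\tau)$, set $\phi=-\Rm(v,\bar v)$, and combine \eqref{eq_Rm_I_minus_Rm_Q} with \eqref{cross_term} applied to the boundary operator to bound $-2Q(\Rm)(v_0,\bar v_0)$ by $\scal\,\ell + C\ell^2$. The only cosmetic differences are that you use Uhlenbeck's trick where the paper uses parallel transport along radial geodesics together with $\nabla_t$ (equivalent for producing $\nabla v=\Delta v=\nabla_t v=0$ at the point), and that you make explicit the appeal to $SO(n,\C)$-invariance of $S$ to keep the extended section in $S$, which the paper leaves implicit.
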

\noindent Recall that by standard arguments, inequality \eqref{eq_barrier} holds also in the viscosity sense and in the sense of distributions (see e.g. \cite[Appendix]{Mant} for an elliptic version).

 We remark that it will be crucial for the remainder of this paper that the coefficient in front of the $\scal \ell$ term in \eqref{eq_ell_lambda_claim} is equal to $1$.  Observe also that, as $\mc C$ is preserved by the ODE $\frac{d}{d t} \Rm = 2 Q(\Rm)$, we have 
$Q(\Rm) (v, \bar v) \geq 0$ for all $v$ as in the statement, and hence the right hand side of \eqref{cross_term} is well-defined.

\begin{proof}
Set ${\Rm}^\ast(p, t) := \Rm_{g(t)}(p) + \ell (p, t) \I$; by \eqref{def_ell} it is clear that ${\Rm}^\ast \!\in \mc C$. For an arbitrary point $(q, \tau) \in M \times (0, T)$, we can assume that $\Rm^\ast(q, \tau) \in \partial \mc C$. Indeed, if it belongs to the interior of $\mc C$, then $\ell \equiv 0$ in a neighborhood of $(q, \tau)$, and the barrier function $\phi \equiv 0$ will satisfy \eqref{eq_barrier}.

As $S$ is a cone over a compact subset, we can find $v \in S$ with $|v|  =  1$
such that $\Rm^\ast(v, \bar v) = 0$ at $(q, \tau)$. Then $\I(v, \bar v) = 1$ and we are in position to apply \eqref{cross_term} to deduce that at $(q, \tau)$ we have
\[-\lambda \sqrt{Q(\Rm^\ast) (v, \bar v)}  \leq   \tfrac1{2} \scal^\ast -   (\Ric^\ast \owedge \id)(v, \bar v)  =   \tfrac1{2} \scal-   (\Ric \owedge \id)(v, \bar v)  + \tfrac1{2}(n-1)(n-4)\, \ell,\]
where $\Ric^\ast = \Ric(\Rm^\ast)$ and $\scal^*= \scal(\Rm^\ast)$. Multiplying this inequality  by $\ell$ and adding \eqref{eq_Rm_I_minus_Rm_Q} yields
\bec \lb{Q-sqrtQ}
Q(\Rm^\ast)(v, \bar v) - \lambda \ell \sqrt{Q(\Rm^\ast) (v, \bar v)}  \leq Q(\Rm)(v, \bar v) +  \tfrac1{2} \scal \, \ell +  \tfrac1{2} (n-1) (n -2) \ell^2.
\eec

On the other hand, extend $v$ smoothly to a neighborhood $\mc U$ around $(q, \tau)$ in the following way: take $u$ be the real part of $v$, first extend $u$ to a neighborhood of $q$ in $M$ by parallel translation along radial geodesics using $\nabla^{g(t)}$, and then extend $u$ in time to make it constant in time in the sense that $\nabla_t u = 0$, for the space-time connection in \eqref{eq_evolution_Rm_in_proof}. Applying the same extension to the imaginary part of $v$, we get $v(x,t)\in \so(n, \C)$ for all $(x, t) \in \mc U$ of unit norm and so that $\nabla_t v = \Delta v = 0$ at $(q, \tau)$.

Next the function $\phi:= -\Rm(v, \bar v)$ is defined in $\mc U$ and gives a lower barrier for $\ell$ in that neighborhood.  Hence at $(q, \tau)$ we get
using \eqref{Q-sqrtQ}
\begin{align*}
(\partial_t - \Delta) \phi & = -[(\nabla_t - \Delta) \Rm](v, \bar v) = -2 Q(\Rm)(v, \bar v) \\ & \leq   \scal \, \ell + (n-1)(n-2)\ell^2 -2\big( \sqrt{Q(\Rm^\ast)(v, \bar v)} - \tfrac{\lambda}{2} \ell\big)^2 + \tfrac{\lambda^2}{2} \ell^2
\end{align*} 
and the result follows with $C:= \tfrac{\lambda^2}{2} + (n-1)(n-2)$.
\end{proof}

We can finally state the main result of this section.
\begin{prop} \label{lem:ell}
Let $\mc C = \mc C(S) \subset S^2_B(\so(n))$ denote the curvature condition corresponding to $S = S_1, \ldots, S_5$, as defined above, and assume $n\neq 6$ if $S = S_3$. Then there is a constant $C \in (0, \infty)$ such that the following holds:
Let $(M,g(t))$ be a solution of the Ricci flow, and we define $\ell(p,t)$ as the minimal number in (\ref{def_ell}).  Then $\ell$ satisfies \eqref{eq_ell_lambda_claim} in the barrier and viscosity sense. 
 \end{prop}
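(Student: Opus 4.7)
The plan is to invoke Lemma \ref{lem_ell_equation} directly: it suffices to verify the algebraic inequality \eqref{cross_term} for each of the cones $\mc C = \mc C(S_i)$, $i \in \{1,\ldots,5\}$ (excluding $n=6$ when $i=3$), namely to exhibit $\lambda = \lambda(n)$ such that
$$ (\Ric \owedge \id - \tfrac{\scal}{2}\, \I)(v, \bar v) \le \lambda\sqrt{Q(\Rm)(v, \bar v)} $$
whenever $\Rm \in \partial \mc C$, $v \in S_i$ with $|v|=1$ and $\Rm(v,\bar v) = 0$. This step is purely algebraic, and all the Ricci-flow content has already been absorbed into the barrier statement \eqref{eq_barrier}.

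My approach is to revisit the standard preservation arguments---Hamilton \cite{Ham86} for $S_1$, H.~Chen \cite{Chen} for $S_2$, Brendle-Schoen \cite{BS} and Nguyen \cite{Huy} for $S_3, S_4, S_5$---and extract slightly more information than plain non-negativity of $Q(\Rm)(v,\bar v)$. In each of these arguments, one exploits the $SO(n,\C)$-invariance of $S_i$ to deform $v$ along curves $v_t \in S_i$ (for example $v_t = e^{tX} v\, e^{-tX}$, or the combinatorial plane rotations used to treat rank-$2$ elements for $i = 3,4,5$). Since $v$ minimizes $\Rm(\,\cdot\,,\overline{\,\cdot\,})|_{S_i}$ at $t=0$, the first variation yields linear identities $\Re\, \Rm(v,\overline{w}) = 0$ for certain $w \in T_v S_i$, while the second variation produces a collection of non-negative quantities whose sum is precisely $2\, Q(\Rm)(v,\bar v)$ (viewed as a sum of squared "curvature moments"). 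Working in a frame adapted to $v$---e.g.~$v = e_1 \wedge e_2 + i\, e_3 \wedge e_4$ in the $\text{PIC}$-type cases, or $v = e_1 \wedge e_2$ in cases $S_1, S_2$---the left-hand side $(\Ric \owedge \id - \tfrac{\scal}{2}\I)(v,\bar v) = -\tr(\Ric\, v\, \bar v) - \tfrac{\scal}{2}$ collapses to a signed sum of Ricci entries $R_{kk}$ with $k$ outside the support of $v$, and each $R_{kk}$ decomposes as a sum of sectional/mixed curvatures $R_{akak}$ that appear as summands of $Q(\Rm)(v,\bar v)$. Bounding each such summand by the full sum of squares via Cauchy-Schwarz gives the desired $\sqrt{Q(\Rm)(v,\bar v)}$ bound.

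The reason $\sqrt{\,\cdot\,}$ appears on the right of \eqref{cross_term} rather than the more natural $Q(\Rm)(v,\bar v)$ itself is that the Ricci combination on the left is linear in $\Rm$, while $Q$ is quadratic; the Cauchy-Schwarz step is what aligns these two homogeneities. The exceptional case $(i, n) = (3, 6)$ must be excluded because the family of deformations used for weak $\text{PIC}$ fails to span the relevant subspace of $\so(6,\C)$---this is the same dimensional anomaly present in \cite{BS, Huy}.

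The main obstacle, and the reason the proposition is nontrivial, is the \emph{exact} coefficient $\tfrac12$ in front of $\scal\, \I$ on the left-hand side of \eqref{cross_term}: any other constant would also produce a valid evolution inequality for $\ell$, but only this specific value yields the coefficient $1$ in front of $\scal\cdot \ell$ in \eqref{eq_ell_lambda_claim}. That coefficient $1$ is what causes the cross-term in \eqref{eq_dt_int_ell} to cancel against the volume distortion $-\ell\cdot \scal$, which is in turn the starting point for the Gaussian heat-kernel estimate of Section \ref{sec_hk}. Consequently, the bookkeeping in each of the five cases must be done carefully enough to show that, after subtracting exactly $\tfrac{\scal}{2}|v|^2$, what remains is a combination of Ricci components that can be absorbed into $\sqrt{Q(\Rm)(v,\bar v)}$ with a dimensional constant $\lambda(n)$; this is the computational heart of the argument.
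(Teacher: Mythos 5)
Your reduction to verifying the algebraic inequality \eqref{cross_term} via Lemma \ref{lem_ell_equation} is correct, and your discussion of why the coefficient $\tfrac12$ in $\tfrac{\scal}{2}\I$ (equivalently the coefficient $1$ in front of $\scal\,\ell$) is the one that matters downstream is exactly right. However, the verification of \eqref{cross_term} in each of the five cases is the substance of the proposition, and the strategy you sketch for it has genuine gaps and in places does not match what actually works.

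First, for $S_1$ and $S_2$ you propose working with $v = e_1 \wedge e_2$; this is not a legitimate normal form, since generic elements of $\so(n,\C)$ (even real skew-symmetric ones) are not decomposable $2$-vectors. The correct normal form is the full block-diagonal one $u(e_{2i-1}) = a_i e_{2i}$ with $m \le n/2$ blocks, and the resulting estimate for $S_1$ is a delicate combinatorial rearrangement using the constraint $\Rm(u,u)=0$ together with $\Rm(\omega_{ij},\omega_{ij})\ge 0$ for the auxiliary $2$-forms $\omega_{ij} = a_j(e_{2i-1}\wedge e_{2i}) + a_i(e_{2j-1}\wedge e_{2j})$; it yields $\lambda = 0$, not a $\sqrt{Q}$ bound. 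Second, your proposed Cauchy--Schwarz step does not close dimensionally: the Ricci entries $R_{kk}$ are linear in $\Rm$ while $Q(\Rm)(v,\bar v)$ is quadratic, so one cannot bound individual sectional/mixed curvatures by $\sqrt{Q(\Rm)(v,\bar v)}$ term-by-term. What actually works in the $S_2$ case is a \emph{spectral} fact: from $Q(\Rm)(v,\bar v) \ge \Rm^2(v,\bar v) \ge \mu_1^2$ (where $\mu_1$ is the most negative eigenvalue of $\Rm$) one obtains the global lower bound $\Rm(\eta,\bar\eta) \ge -\sqrt{Q(\Rm)(v,\bar v)}\,|\eta|^2$, which then feeds into an adaptation of the $S_1$ argument. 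Third, for $S_4, S_5$ the key move is not a continuous second-variation deformation but a discrete symmetry swap $(a\leftrightarrow b, c\leftrightarrow d)$ in the standard form \eqref{eq_std_matrix_form}, which again gives $\lambda = 0$. Fourth, the $S_3, n\ne 6$ case is handled via a separate argument (center-of-mass symmetrization over $\Or(3)\cdot\Or(n-3)$ to show $3$-non-negativity of $\Ric$ for $n \ge 7$, plus ad hoc arguments for $n = 4, 5$), and the exclusion of $n = 6$ is specific to this inequality — it is not the dimensional anomaly you are thinking of from \cite{BS, Huy}, where no analogue of \eqref{cross_term} appears. In short: the scaffolding is right, but the computational heart, which you explicitly acknowledge is the point, is neither carried out nor correctly strategized.
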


\begin{proof}
By Lemma \ref{lem_ell_equation}, we need to verify \eqref{cross_term} for each choice of $S$. 
In all cases we will consider $\Rm\in \mc C(S)$ and an element $v =  u + i w  \in S$ with $|v| = 1$ and
\bec \lb{Rmvv=0}
\Rm (v, \bar v) = \Rm (u, u) + \Rm (w, w) = 0, \qquad \text{ where } \quad u, w \in \so(n, \R).
\eec
Moreover, it is easy to check that $\tr(\Ric v \bar v)= \tr(\Ric u^2) + \tr(\Ric w^2)$ and hence by means of \eqref{Ric_id_vv} and \eqref{Ivv}, the desired bound \eqref{cross_term} can be written as
\begin{equation} \lb{eq_4Ric_tr_vv}
 -  \tr \big(\Ric \, (u^2+ w^2)\big) \leq \tfrac1{2}\scal  (|u|^2 + |w|^2) + \lambda \sqrt{Q(\Rm)(v, \bar v)}
\end{equation}

\fbox{$\bm{S= S_1}$} \  
We will prove \eqref{eq_4Ric_tr_vv} for $\lambda = 0$. As $\Rm \geq 0$, by \eqref{Rmvv=0} $u$ and $w$ are in the kernel of $\Rm$. Then it is enough to 
consider the case $v=u$, that is, to show that
$$- \tr(\Ric u^2) \leq \tfrac1{2} \scal \quad \text{for any} \quad u \in \so(n, \R) \quad \text{with} \quad |u|=1 \quad \text{and}\quad \Rm(u, u) = 0.$$

Choose an orthonormal basis $e_1, \ldots, e_n \in \R^n$ such that
 \[u(e_{2i-1}) = a_i e_{2i}, \qquad u( e_{2i} ) = - a_i e_{2i-1} \]
for some real numbers $a_1, a_2, \ldots, a_m \in \R$ for $m \leq \frac{n}2$.
Note that if $n$ is odd, then $u(e_n) = 0$.
We will carry out all the following calculations in the basis $e_1, \ldots, e_n$.
We have $a_1^2 + a_2^2 + \ldots = |u|^2 = 1$ and moreover
\begin{equation} \label{eq_case_1_goal}
- \tr(\Ric u^2)= \sum_{i = 1}^m a^2_i ( R_{(2i)(2i)} + R_{(2i-1)(2i-1)}  ). 
\end{equation}

 If we express the sectional curvature in the $e_i \wedge e_j$ direction by $K_{i,j} := R_{ijij}$, we get
\begin{equation} \label{eq_Rm_v_v_0}
 0 =  \Rm (u, u) = \sum_{i =1}^m a_i^2 K_{2i-1, 2i} + 2 \sum_{1 \leq i < j \leq m} a_i a_j R_{(2i -1) 2i (2j -1) 2j}. 
\end{equation}
Next, for any $1 \leq i < j \leq m$ and
$ \omega_{ij} := a_j (e_{2i-1} \wedge e_{2i} ) + a_i (e_{2j-1}  \wedge e_{2j} ),$
  we have
\begin{equation} \label{eq_Rm_ai_aj}
 0 \leq \Rm (\omega_{ij}, \omega_{ij} ) 
= a^2_j K_{2i-1, 2i} + a_i^2 K_{2j-1, 2j} + 2\, a_i a_j R_{(2i-1)2i(2j-1)2j}. 
\end{equation}
Adding (\ref{eq_Rm_ai_aj}) for all $1 \leq i < j \leq m$ and subtracting (\ref{eq_Rm_v_v_0})  yields
$$\sum_{i = 1}^m a_i^2 K_{2i-1, 2i} \leq 
  \sum_{1 \leq i < j \leq m} \big( a^2_j K_{2i-1, 2i} + a_i^2 K_{2j -1,  2j} \big) 
 \leq  \sum_{i =1}^m  (1 - a_i^2) K_{2i-1 , 2i}.$$
So
\[ 2\sum_{i =1}^m a_i^2 K_{2i -1 ,2 i} \leq  \sum_{i =1}^m   K_{2i -1 , 2i}, \]
and we get by \eqref{eq_case_1_goal} that
\begin{align*}
- \tr(\Ric u ^2) &= \sum_{i = 1}^m a_i^2 \Big( 2 K_{2i-1, 2i} + \sum_{\substack{1 \leq j \leq 2m \\ j \neq 2i -1, 2i}} (K_{2i, j} + K_{2i-1, j}) \Big) \notag \\
&\leq   \sum_{i = 1}^m   K_{2i-1, 2i} + \!\!\! \sum_{1 \leq i < j \leq m} \!\!\!\!(a_i^2 + a_j^2 ) (K_{2i, 2j} + K_{2i-1, 2j} + K_{2i, 2j-1} + K_{2i-1, 2j-1}). %\notag \\
\end{align*}
\noindent Finally $a_i^2 + a_j^2 \leq 1$ implies $- \tr(\Ric u^2)  \leq \frac1{2} \scal$, as we wanted to show.

\medskip

\fbox{$\bm{S= S_2}$} \ 
Take $v \in S_2$ as in \eqref{Rmvv=0}. From $0 = \tr (v^2) = \tr (u^2 - w^2 ) + 2 i \tr (u w) = 2(|w|^2 - |u|^2)   - 4 i \langle u, w \rangle$,
we get $|u|^2 = |w|^2 = \frac12$ and $\<u, w\> = 0$. By the discussion in \cite[section 3]{WLie} we also have 
\bec \lb{QgeqRm2}
Q(\Rm)(v, \bar v) \geq \Rm^2(v, \bar v).
\eec

Let $\mu_1 \leq \mu_2 \leq \ldots \leq \mu_{n(n-1)/2}$ be the eigenvalues of $\Rm$.
The curvature assumption implies $\mu_1 + \mu_2 \geq 0$ and $\mu_i \geq 0$ for all $i \geq 2$. So the eigenvalues of $\Rm^2$ are $\mu_1^2 \leq \mu_i^2$ for all $i \geq 2$ and hence $\mu_1^2 \leq \Rm^2 (v, \bar v) \leq Q(\Rm)(v, \bar v)$, which follows from \eqref{QgeqRm2}. Then 
\begin{equation} \label{eq_lower_Rm_bound}
 \Rm (\eta, \bar\eta) \geq \mu_1 |\eta|^2 \geq - \sqrt{Q(\Rm)(v, \bar v)} \cdot |\eta|^2 \qquad  \text{for any } \quad {\eta} \in \so (n, \C). 
\end{equation}
This inequality and \eqref{Rmvv=0} imply the bound
\bec \lb{sust_Rm0}
 | \Rm (u, u) | = | \Rm (w, w) | \leq \tfrac1{2}\sqrt{Q(\Rm)(v, \bar v)}.
\eec

We now carry out a similar analysis as in the first case for each $u$ and $v$.
There are, however, two key differences.
First, we need to use the bound (\ref{eq_lower_Rm_bound}) in lieu of the non-negativity of $\Rm$ in \eqref{eq_Rm_ai_aj}. 
And second, the identity $\Rm (u, u) = 0$ in \eqref{eq_Rm_v_v_0} has to be substituted by \eqref{sust_Rm0}. Note that for the last line of the argument we will use that 2-non-negative curvature operator implies $K_{i,j} + K_{i,j+1} \geq 0$ for all $i \neq j$. Taking these modifications into account, we obtain \eqref{eq_4Ric_tr_vv} for some $\lambda > 0$.

\medskip

\fbox{ \bm{$S=S_4, \,S_5$}} \
Recall that $\rank (v) = 2$ and $v = u + i w$.
So $\rank (u) , \rank (w) \leq 4$.
As $u$ is skew-symmetric, it has eigenvalues of the form $ia, -ia, ib, -ib, 0, \ldots, 0$, $|a| \geq |b|$. 
By multiplying $v = u + i w$ with a complex number of norm 1, we may assume additionally that $|a|$ is maximized.
We can now choose an orthonormal basis $e_1, \ldots, e_n$ of $\R^n$ such that
\[
u = \begin{pmatrix}
  \begin{matrix}
  0 & a & 0 & 0  \\-a & 0 & 0 & 0  \\ 0 & 0 & 0 & b  \\ 0 & 0 & -b & 0
  \end{matrix}
  & \rvline &  \bigzero  \\
\hline 
 \parbox[c]{0.4cm}{\bigzero} & \rvline &
  \begin{matrix}
  \parbox[c]{0.4cm}{\bigzero} 
  \end{matrix}
\end{pmatrix}, \qquad
w = \begin{pmatrix}
  \begin{matrix}
  0 & w_{12} & c & 0  \\ -w_{12} & 0 & 0 & d  \\-c & 0 & 0 & w_{34}  \\ 0 & -d & -w_{34} & 0
  \end{matrix}
  & \rvline &  \bigzero  \\
\hline 
 \parbox[c]{0.4cm}{\bigzero} & \rvline &
  \begin{matrix}
  \parbox[c]{0.4cm}{\bigzero} 
  \end{matrix}
\end{pmatrix}
\]
Due to the maximal choice of $|a|$, we find that $w_{12} = 0$.
Moreover, if $a = 0$, then $b = 0$ and again by the maximal choice of $a$ we have $w_{34} = 0$.
On the other hand, if $a \neq 0$, then due to the fact that $\rank (v) = 2$, we must have $w_{34} = 0$ as well.
It follows that
\begin{equation} \label{eq_std_matrix_form}
v = \begin{pmatrix}
  \begin{matrix}
  0 & a & ic & 0  \\ -a & 0 & 0 & id  \\ -ic & 0 & 0 & b  \\ 0 & -id & -b & 0
  \end{matrix}
  & \rvline &  \bigzero  \\
\hline 
 \parbox[c]{0.4cm}{\bigzero} & \rvline &
  \begin{matrix}
  \parbox[c]{0.4cm}{\bigzero} 
  \end{matrix}
\end{pmatrix}, 
\end{equation}
and therefore
\[
v \bar v = u^2 + w^2 = {\rm diag}\big(a^2 + c^2, a^2+d^2, b^2 + c^2, b^2 + d^2, 0, \ldots, 0\big).\]
So $|v|^2 =  1$ amounts to $a^2 + b^2 + c^2 + d^2 = 1$.
Let $\alpha_1,\ldots, \alpha_4\le 1$ denote the first four diagonal entries of the above matrix.
Using as above the notation for the standard sectional curvature planes from above and  we get
\begin{align} \nonumber
-\trace(\Ric v\bar v)&=
 (a^2-b^2)(K_{12}-K_{34}) + (c^2-d^2)(K_{13}-K_{24}) + \!\!\sum_{1\le i<j\le 4 }K_{ij}+\sum_{i=1}^4 \alpha_i \sum_{j=5}^{n}K_{ij}\\
\label{ineq:csec} 
&\le \tfrac1{2}\scal+ (a^2-b^2)(K_{12}-K_{34}) + (c^2-d^2)(K_{13}-K_{24}),
\end{align}
where we have used that $K_{ij}+K_{kj}\ge 0$ holds for all pairwise different $i$, $j$ and $k$.
Next,
\begin{align*}
0&= \Rm(v,\bar v)=\Rm(u,u)+\Rm(w,w)\\
&= a^2K_{12}+b^2 K_{34}+c^2 K_{13}+d^2 K_{24}+2ab \, R_{1324}+2cd\, R_{1324}
\end{align*}
If we switch the roles of $a$ and $b$ and simultaneously the roles of $c$ and $d$ in the above expression, we get the curvature $\Rm(z,\bar z)$ of  some element $z\in S$. 
 Hence
 $$
0\le\Rm(z,\bar z)-\Rm(v,\bar v)=(b^2-a^2)(K_{12}-K_{34}) + (d^2-c^2)(K_{13}-K_{24})
$$
Adding this inequality to \eqref{ineq:csec} yields the desired inequality.

\medskip

\fbox{$\bm{S=S_3}$ with $n\neq 6$} \ 
By the discussion in the previous case we can again choose an orthonormal basis $e_1, \ldots, e_n$ of $\R^n$ such that (\ref{eq_std_matrix_form}) holds.
The condition $v^2 = 0$ implies that $a^2 = b^2 = c^2 = d^2 = \frac14$ and $ad + bc = ac + bd = 0$.
So
$$ - 2 \tr (\Ric v \bar v)  = 
 R_{11} + R_{22} + R_{33} + R_{44} \leq \lambda_{n-3}+\lambda_{n-2}+\lambda_{n-1}+\lambda_n,$$
where $\lambda_1\le \cdots\le \lambda_n$ denote the eigenvalues of $\Ric$.
If $n=4$, we are done and if $n\ge 7$, then the left hand side is bounded from above by $ \scal$ provided we can establish:

{\it \underline{Claim.} For $n \geq 7$, any $\Rm \in \mc C(S_3)$ has 3-non-negative Ricci curvature.}
 
Take a basis $e_1,\ldots,e_n$ of eigenvectors  of $\Ric$. 
We consider the subgroup $\G=\Or(3)\cdot \Or(n-3)\subset \Or(n)$ leaving 
${\rm span}(e_1, e_2, e_3)$ invariant.  Without loss of generality, we can assume that $\Rm$ is fixed by $\G$. In fact, we can replace 
$\Rm$ by the center of mass $\Rm_C$ of its $\G$-orbit. Of course, $\Rm_C$ still has weakly {\rm PIC} and the quantity $\lambda_1 + \lambda_2 + \lambda_3$ remains unchanged if we replace $\Ric$ by $\Ric(\Rm_C)$.

Then we may assume that $\Rm$ is a multiple of the identity on each of the three 
$\Ad_\G$-invariant subspaces $\so(3)$, $\so(n-3)$ and $\Lg^\perp:=(\so(3)\oplus \so(n-3))^\perp$.
Denote by $\mu$ and $\eta$ the eigenvalues of $\Rm$ on $\so(3)$ and on 
$\Lg^\perp$, respectively. 
Since $\Rm$  has weakly {\rm PIC} and we can 
find nilpotent rank 2 matrices in the complexification of $\Lg^\perp$,
we have $\eta\ge 0$. 
Furthermore, we know that $3\mu+3\eta \ge 0$, because this number corresponds to the trace  of $\Rm$ restricted to the upper $4\times 4$-block. 
It now follows that 
\[
 \lambda_1+\lambda_2+\lambda_3=6\mu+(n-3)\cdot 3\eta\ge 0.
\]

It remains to consider the case $n=5$. 
Here we use the fact that $\Rm(v,\bar v)=0$.
Consider again an orthonormal basis $e_1, \ldots, e_5$ or $\R^5$ such that $v$ takes the form (\ref{eq_std_matrix_form}) and recall that $a^2 = b^2 = c^2 = d^2 = \frac14$ and $ad + bc = ac + bd = 0$.
So $a,b,c,d = \pm \frac12$, where we have to choose $+ \frac12$ an odd number of times.
So after possibly permuting $e_1, \ldots, e_4$ and multiplying $v$ by a complex number of norm 1, we can assume
 \[
v=\frac{1}{2}\left(\begin{array}{ccccc}0& 1 & i & 0 &0\\ - 1 & 0 & 0 & -i & 0 \\ -i & 0 & 0 & 1 & 0 \\ 0 & i & - 1 & 0 & 0 \\ 0 & 0 & 0 & 0 & 0
\end{array}\right)
\]
By a second variation argument we have
 $\Rm(\ad_v w,\ad_{\bar v }\bar{w})\ge 0$ for all $w\in\so(n,\C)$, see \cite{WLie}. 
This in turn implies that $\Rm(z_i,\bar z_i)\ge 0$ ($i=1,2$) holds 
for $z_1=(e_1 + ie_4)\wedge e_5$ and $z_2=(e_2 + ie_3)\wedge e_5$. 
Thus $\Ric(e_5,e_5)=\Rm(z_1,\bar z_1)+\Rm(z_2,\bar z_2)\ge 0$ and the claim follows as well.
\end{proof}

\subsection{K\"ahler case}
Assume for the rest of this subsection that $(M, g, J)$ is a K\"ahler manifold of complex dimension $n = 2m$. Recall that the splitting $T_{\C}M = T^{1, 0} M \otimes T^{0,1} M$ into the $\pm i$-eigenspaces of $J$ implies $R(x, y, z, {\rm w}) = 0$ if $x, y \in T^{1, 0} M$ or $x, y \in T^{0, 1} M$ and similarly for $z, {\rm w}$. Now let $\{e_1, \ldots, e_{2m}\}$  be a local  orthonormal frame for $M$ so that $J e_k = e_{m + k}$ for $k = 1, \ldots, m$. We consider the unitary frame $\{z_k\}_{k = 1}^m$ given by $z_k:= \frac1{\sqrt{2}}(e_k - i Je_k) \in T^{1, 0} M$ so that $g_{a \bar b} = g(z_a, \bar z_b) = \delta_{a b}$. Then we can write
$$R_{a \bar b c \bar d}:=\Rm(z_a, \bar z_b, z_c, \bar z_d), \quad R_{a \bar b} = \Ric(z_a, \bar z_b) = - \sum_{c=1}^m R_{a \bar b c \bar c}, \quad \scal = 2 \sum_{a = 1}^n R_{a \bar a},$$
where $g$, $\Rm$ and $\Ric$ are the complex-linear extensions of the corresponding tensors.

We say that $M$ has non-negative bisectional curvature ($K_{\C} = K_\C(\Rm) \geq 0$) if
$$K_{\C}(x, y) :=-R_{a \bar b c \bar d} \, x^a \bar x^b y^c \bar y^d \geq 0 \qquad \text{for all}\quad  x, y \in T^{1,0}_p M \quad \text{and all} \quad p \in M.$$
This condition is known to be preserved under the K\"ahler Ricci flow $\parcial{g_{a \bar b}}{t} = -2 R_{a \bar b}$ (cf.~\cite{Mok} and \cite{Shi} for closed and complete manifolds with bounded curvature, respectively).

Denote by $\tilde \I$ the curvature tensor of $\C {\rm P}^m$, normalized so that it has constant holomorphic sectional curvature $2$. With respect to the unitary frame, we have
$$- \tilde \I_{a \bar b c \bar d} = \delta_{a b} \delta_{cd} + \delta_{ad}\delta_{bc}.$$
For any point $p \in M$ and time $t$, we set, similarly to \eqref{def_ell},
\begin{equation} \label{def_tilde_ell}
 \tilde \ell(p, t):= \inf \big\{\alpha \geq 0 \ | \  K_\C(\Rm_{g(t)} + \alpha \, \tilde \I)(x, y) \geq 0 \; \text{for all }  x, y \in T^{1,0}_p M\big\}. 
\end{equation}

Hereafter we assume that repeated indices are summed over $1, \ldots, m$. 
The quadratic term in the evolution equation \eqref{eq_evolution_Rm_in_proof} under the K\"ahler Ricci flow becomes (see e.g.~\cite[(125) in section 5]{Shi}) 
\[ Q(\Rm)_{a \bar b c \bar d} = - R_{a \bar b r \bar s} R_{s \bar r c \bar d}  - R_{a \bar d  r \bar s} R_{c \bar b s \bar r} +   R_{a \bar s c \bar r} R_{s \bar b r \bar d}
.\]
And the analogue of the Kulkarni-Nomizu product in this setting is  
$$(\Ric \tilde\owedge \id)_{a\bar b c \bar d} := R_{a \bar b} \delta_{cd}  + R_{c \bar d} \delta_{ab} + R_{c \bar b} \delta_{ad} + R_{a \bar d} \delta_{bc}.$$
Now one can easily compute
$$Q(\Rm + \tilde \ell \tilde I) - Q(\Rm) - \tilde \ell^2 Q(\tilde \I) 
 = \tilde \ell \Ric \tilde \owedge \id.$$
So the analogue of Lemma~\ref{lem_ell_equation} is:

\begin{lem}
Assume that there is a constant $\lambda \in [0, \infty)$ such that for every K\"ahler curvature tensor $\Rm$ with $K_\C \geq 0$ and any $x, y \in T^{1,0} M$  for which $K_\C(x,  y) = 0$ we have
\bec \lb{cross_term_Kaehler_lemma} \big( { \Ric \tilde\owedge \id +  \tfrac{\scal}{2} \, \tilde \I}\,\big)(x, \bar x, y, \bar y) \leq  \lambda \sqrt{-Q(\Rm)(x, \bar x, y, \bar y)}. \eec
Then there is a constant $C$ such that in the barrier and viscosity sense
\begin{equation} \label{eq_ell_lambda_claim_2}
 \partial_t \tilde \ell \le \Delta \tilde \ell +  \scal \tilde \ell + C \tilde \ell^2.
\end{equation}
\end{lem}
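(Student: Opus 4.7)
The plan is to mirror the proof of Lemma~\ref{lem_ell_equation} step by step, with the pair $(x,y)\in T^{1,0}_q M\times T^{1,0}_q M$ replacing the single vector $v\in S\subset\so(n,\C)$, the K\"ahler tensor $\tilde\I$ replacing $\I$, and bisectional curvature $K_\C$ replacing the form $\Rm(\cdot,\bar\cdot)$. Fix $(q,\tau)\in M\times(0,T)$; if $\tilde\ell\equiv 0$ in a neighborhood one uses the trivial barrier $\phi\equiv 0$, so assume $\tilde\ell(q,\tau)>0$. Then $\Rm^*:=\Rm+\tilde\ell\,\tilde\I$ lies on the boundary of the non-negative bisectional cone at $(q,\tau)$, so by compactness of $\PP(T^{1,0}_q M)\times\PP(T^{1,0}_q M)$ one selects $x,y\in T^{1,0}_q M$ with $K_\C(\Rm^*)(x,y)=0$. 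Since $K_\C(\tilde\I)(x,y)=|x|^2|y|^2+|\langle x,y\rangle|^2>0$, I rescale so that $K_\C(\tilde\I)(x,y)=1$.

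Next I apply the hypothesis \eqref{cross_term_Kaehler_lemma} to $\Rm^*$ at this pair and translate back to $\Rm$. Direct computation yields $\Ric(\tilde\I)=(m{+}1)\id$, $\scal(\tilde\I)=2m(m{+}1)$, $(\id\tilde\owedge\id)(x,\bar x,y,\bar y)=2K_\C(\tilde\I)(x,y)=2$ and $\tilde\I(x,\bar x,y,\bar y)=-K_\C(\tilde\I)(x,y)=-1$. Expanding $\Ric^*=\Ric+(m{+}1)\tilde\ell\,\id$ and $\scal^*=\scal+2m(m{+}1)\tilde\ell$, the hypothesis rearranges into
\[
-\lambda\sqrt{-Q(\Rm^*)(x,\bar x,y,\bar y)}\;\le\;-(\Ric\tilde\owedge\id)(x,\bar x,y,\bar y)+\tfrac{\scal}{2}+(m{+}1)(m{-}2)\,\tilde\ell.
\]
Multiplying by $\tilde\ell\ge 0$ and adding the algebraic identity $Q(\Rm^*)=Q(\Rm)+\tilde\ell\,\Ric\tilde\owedge\id+\tilde\ell^2 Q(\tilde\I)$ (evaluated at $(x,\bar x,y,\bar y)$, stated just before the lemma) produces the K\"ahler analogue of \eqref{Q-sqrtQ}:
\[
-Q(\Rm^*)(x,\bar x,y,\bar y)-\lambda\tilde\ell\sqrt{-Q(\Rm^*)(x,\bar x,y,\bar y)}\;\le\;-Q(\Rm)(x,\bar x,y,\bar y)+\tfrac{1}{2}\scal\,\tilde\ell+K\tilde\ell^2,
\]
with $K=K(m,\lambda)$ absorbing the $(m{+}1)(m{-}2)$ and $Q(\tilde\I)(x,\bar x,y,\bar y)$ contributions.

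For the barrier, extend $x,y$ to a spacetime neighborhood $\mc U$ of $(q,\tau)$ by parallel-transporting along $g(\tau)$-radial geodesics in $M$ and then in time by $\nabla_t x=\nabla_t y=0$; compatibility of the Levi-Civita connection with the complex structure (constant in $t$ for a K\"ahler Ricci flow) keeps the extensions in $T^{1,0}$, and preservation of the Hermitian inner product by parallel transport keeps $K_\C(\tilde\I)(x,y)\equiv 1$ on $\mc U$. Set $\phi(p,t):=-K_\C(\Rm_{g(t)})(x(p,t),y(p,t))$. The defining inequality $K_\C(\Rm+\tilde\ell\tilde\I)(x,y)\ge 0$ together with $K_\C(\tilde\I)(x,y)=1$ gives $\phi\le\tilde\ell$ on $\mc U$, with equality at $(q,\tau)$ by construction. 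At $(q,\tau)$ all first covariant derivatives of $x,y$ vanish and $\Delta x=\Delta y=0$ as well, so $\nabla_t\Rm=\Delta\Rm+2Q(\Rm)$ gives $(\partial_t-\Delta)\phi|_{(q,\tau)}=-2Q(\Rm)(x,\bar x,y,\bar y)$. Substituting the bound from the previous step and completing the square
\[
-2Q(\Rm^*)+2\lambda\tilde\ell\sqrt{-Q(\Rm^*)}\;=\;-2\bigl(\sqrt{-Q(\Rm^*)}-\tfrac{\lambda}{2}\tilde\ell\bigr)^{\!2}+\tfrac{\lambda^2}{2}\tilde\ell^2\;\le\;\tfrac{\lambda^2}{2}\tilde\ell^2
\]
yields $(\partial_t-\Delta)\phi\le\scal\,\tilde\ell+C\tilde\ell^2$ with $C=C(m,\lambda)$; the viscosity and distributional forms of \eqref{eq_ell_lambda_claim_2} follow by standard arguments, as noted after Lemma~\ref{lem_ell_equation}. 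The only real obstacle is the bookkeeping of the linear-in-$\tilde\ell$ corrections arising when rewriting the hypothesis in terms of $\Ric$ and $\scal$ of $\Rm$ rather than $\Rm^*$; these are purely algebraic and are absorbed into the $C\tilde\ell^2$ term via completing the square.
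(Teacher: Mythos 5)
Your overall strategy is the right one and is exactly what the paper intends: the paper states this lemma as ``the analogue of Lemma~\ref{lem_ell_equation}'' without proof, so mirroring that argument with the pair $(x,y)$ in place of $v$, $\tilde\I$ in place of $\I$, and $K_\C$ in place of $\Rm(\cdot,\bar\cdot)$ is the intended route, and your bookkeeping of the linear corrections to $\Ric$ and $\scal$ is correct. The construction of the barrier and the extension of $x,y$ by parallel transport are fine.

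However, there are sign problems that you should straighten out. First, since $\phi=-K_\C(\Rm)(x,y)=\Rm(x,\bar x,y,\bar y)$ and $\nabla_t\Rm=\Delta\Rm+2Q(\Rm)$, one gets $(\partial_t-\Delta)\phi|_{(q,\tau)}=+2Q(\Rm)(x,\bar x,y,\bar y)$, not $-2Q$. (The minus sign in the Riemannian case, $\phi=-\Rm(v,\bar v)$, disappears here because $K_\C$ already carries a sign relative to $\Rm$.) Your completing-the-square identity also has a slipped sign, since what appears is $2Q(\Rm^*)+2\lambda\tilde\ell\sqrt{-Q(\Rm^*)}=-2\bigl(\sqrt{-Q(\Rm^*)}-\tfrac{\lambda}{2}\tilde\ell\bigr)^2+\tfrac{\lambda^2}{2}\tilde\ell^2$, not with $-2Q(\Rm^*)$ on the left; these two errors happen to cancel, which is why your final constant is right. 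Second, and more substantively: the displayed algebraic identity you cite, $Q(\Rm+\tilde\ell\tilde\I)=Q(\Rm)+\tilde\ell\,\Ric\tilde\owedge\id+\tilde\ell^2 Q(\tilde\I)$, cannot be correct as written. Testing it with $\Rm=\tilde\I$ gives $2\tilde\ell\,Q(\tilde\I)=2(m{+}1)\tilde\ell\,\tilde\I$ on the left and $(m{+}1)\tilde\ell\,(\id\tilde\owedge\id)=-2(m{+}1)\tilde\ell\,\tilde\I$ on the right, so the cross term must be $-\tilde\ell\,\Ric\tilde\owedge\id$. Your claimed K\"ahler analogue of \eqref{Q-sqrtQ} is precisely what this corrected identity produces; as you wrote the derivation, a stray $-2\tilde\ell(\Ric\tilde\owedge\id)(x,\bar x,y,\bar y)$ term would be left over, which is not $O(\tilde\ell^2)$. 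You should flag and correct that sign rather than quoting it verbatim. Finally, the dichotomy ``either $\tilde\ell\equiv 0$ in a neighborhood or $\tilde\ell(q,\tau)>0$'' is not exhaustive (one can have $\tilde\ell(q,\tau)=0$ with $\Rm(q,\tau)$ on the boundary of the cone); the correct split, as in the Riemannian lemma, is on whether $\Rm^*(q,\tau)$ lies on the boundary or in the interior of the nonnegative-bisectional cone, though the omitted case is handled by the same barrier with $\tilde\ell(q,\tau)=0$.
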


Using this lemma, we can prove the following analogue of Proposition~\ref{lem_ell_equation}.

\begin{prop} \label{lem:ell_Kaehler}
There is a constant $C < \infty$ such that the following holds:
Let $(M,g(t))$ be a solution of the K\"ahler Ricci flow. Then the minimal number $\tilde \ell(p,t)$ defined by  \eqref{def_tilde_ell} satisfies the evolution inequality \eqref{eq_ell_lambda_claim_2} in the barrier and viscosity sense. 
\end{prop}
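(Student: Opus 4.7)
The plan is to apply the lemma immediately preceding the proposition, which reduces everything to verifying the algebraic cross-term bound \eqref{cross_term_Kaehler_lemma} at an arbitrary K\"ahler curvature tensor $\Rm$ with $K_\C\geq 0$ and at an arbitrary pair of $(1,0)$-vectors $x,y$ saturating the boundary in the sense that $K_\C(x,y)=0$. Since non-negative bisectional curvature is preserved by the ODE $\partial_t\Rm=2Q(\Rm)$ on K\"ahler curvature tensors, a first-variation argument at such a null pair forces $-Q(\Rm)(x,\bar x,y,\bar y)\geq 0$, so the right-hand side of \eqref{cross_term_Kaehler_lemma} is well defined and the inequality has content.

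To verify the bound I would normalize $|x|=|y|=1$ and choose a unitary frame $z_1,\ldots,z_m$ so that $x=z_1$ and $y\in\spann(z_1,z_2)$, say $y=az_1+bz_2$ with $|a|^2+|b|^2=1$. Writing $K_{cd}:=K_\C(z_c,z_d)=-R_{c\bar cd\bar d}\geq 0$ for the bisectional curvatures in this frame, one has $R_{c\bar c}=\sum_d K_{cd}$ and $\scal=2\sum_{c,d}K_{cd}$. I would then expand $(\Ric\,\tilde\owedge\,\id+\tfrac{\scal}{2}\tilde\I)(x,\bar x,y,\bar y)$ in this basis and group by index range: every contribution that involves a direction $z_c$ with $c\geq 3$ assembles into a non-positive combination of the non-negative quantities $K_{1c}$, $K_{2c}$ and $K_{cd}$ ($c,d\geq 3$). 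This step is the direct K\"ahler analogue of the summation trick used for $S=S_1$ in the proof of Proposition~\ref{lem:ell}, with bisectional curvatures replacing the sectional curvatures $K_{i,j}$ there. What remains after this reduction is a residual expression supported entirely on the $2\times 2$ block $\spann(z_1,z_2)$.

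To control the residual $2\times 2$ expression I would combine three ingredients: (i) the null identity $K_\C(x,y)=0$, which is a single quadratic constraint on $a,b$ and the entries $R_{1\bar 1 1\bar 1}$, $R_{1\bar 1 1\bar 2}$, $R_{1\bar 1 2\bar 2}$; (ii) the non-negativity of $K_\C(x',y')$ for nearby pairs $x',y'\in\spann(z_1,z_2)$, which supplies further first- and second-order variational inequalities in the spirit of the test vectors $\omega_{ij}$ used for $S_1$; and (iii) a lower bound on $-Q(\Rm)(x,\bar x,y,\bar y)$ coming from the fact that $Q$ at a null bisectional direction dominates the squares of the relevant off-diagonal entries $R_{1\bar 1 1\bar 2}$ and $R_{1\bar 2 2\bar 2}$, in complete parallel with the bound $Q(\Rm)\geq\Rm^2$ exploited in the $S_2$ case of Proposition~\ref{lem:ell}. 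Combining these produces the desired inequality with some universal constant $\lambda=\lambda(n)\geq 0$.

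The hard part will be the case $a\neq 0$, in which $x$ and $y$ are not orthogonal: the null condition then couples the holomorphic sectional term $R_{1\bar 1 1\bar 1}$, the mixed term $R_{1\bar 1 1\bar 2}$, and the orthogonal bisectional $R_{1\bar 1 2\bar 2}$, so the clean decoupling that works in the orthogonal subcase is unavailable, and the square-root term from ingredient~(iii) is needed in an essential way to absorb these mixed contributions into $-Q(\Rm)(x,\bar x,y,\bar y)$. Once \eqref{cross_term_Kaehler_lemma} is established in full generality, the preceding lemma immediately delivers the barrier and viscosity evolution inequality \eqref{eq_ell_lambda_claim_2}, which is the assertion of the proposition.
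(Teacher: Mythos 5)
Your plan coincides with the paper's proof: reduce via the preceding lemma to verifying \eqref{cross_term_Kaehler_lemma}, normalize so $x=z_1$ and $y$ lies in $\spann(z_1,z_2)$, then pair a left-hand-side bound obtained from non-negative bisectional curvature with Mok's first-variation inequality $-Q(\Rm)(x,\bar x,y,\bar y)\ge\sum_{r,s}|R(x,\bar y,z_r,\bar z_s)|^2$ at the null pair as the key tool. The one simplification worth noting is that your ingredients (i)--(ii) are not needed for the left-hand-side estimate: using only $R_{a\bar a}\ge 0$, $|y^2|\le 1$, and $R_{1\bar 1}+R_{2\bar 2}\le\scal/2$, the left-hand side collapses directly to the Ricci-traced quantity $4\,|R_{1\bar 1}y^1\bar y^1 + R_{1\bar 2}y^1\bar y^2|$ (rather than a pure $2\times 2$-block residual in the entries $R_{1\bar 1 1\bar 2}$, $R_{1\bar 2 2\bar 2}$), after which a Cauchy--Schwarz trace of the Mok inequality over $r,s$ dominates exactly this Ricci-traced quantity, yielding $\lambda=4\sqrt{m}$.
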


\begin{proof}
It is enough to verify (\ref{cross_term_Kaehler_lemma}) for all $\Rm$ with $K_\C \geq 0$ and for  two $(1, 0)$-vectors $x, y$ for which $K_\C(x, y) = 0$. By rescaling and applying a unitary transformation, we may assume that $|x| = |y| = 1$, $x^1 = 1$, $x^a =  0$ for all $a \geq 2$, $y^b  = 0$ for all $b \geq 3$.
Then
\begin{align*}
 \big( { \Ric \tilde\owedge \id +  \tfrac{\scal}{2} \,\tilde \I }\big)(x, \bar x, y, \bar y)  & = 
  R_{1 \bar 1} +  R_{a \bar b}  y^a \bar y^b +  R_{a \bar 1} y^a \bar y^1 +  R_{1 \bar b} \bar y^b  y^1  - \tfrac{\scal}{2} (1+|y^1|^2)
	\nn \\ &
\leq R_{1 \bar 1}(1 + 3 |y^1|^2) + R_{2 \bar 2} |y^2|^2 + 2\big(R_{2 \bar 1} y^2 \bar y^1 +  R_{1 \bar 2} \bar y^2  y^1) -\tfrac{\scal}{2} \nn \\ & 
\leq 4 R_{1 \bar 1} y^1 \bar y^1 + 4\,\Re(R_{1 \bar 2} \bar y^2  y^1) \leq     4 | R_{1\bar1 } y^1 \bar y^1 + R_{1 \bar 2 } y^1 \bar y^2 |,
	\end{align*}
For the second line we used $R_{a \bar a} \geq 0$, $|y^2| \leq 1$ and $R_{1 \bar 1} + R_{2 \bar 2} \leq \frac{\scal}{2}$.
By the discussion in \cite{Mok} we get (see also \cite[Claim 2.2 in Theorem 5.2.10]{IntroK})
\begin{align*}
-Q(\Rm)(x, \bar x, y, \bar y) & \geq \!\sum_{r, s = 1}^m \!|R(x, \bar y, z_r, \bar z_s)|^2  =   R_{1 \bar b r \bar s} R_{s \bar r c \bar 1} \bar y^b y^c   = \sum_{r, s = 1}^m| \!R_{1\bar1 r \bar s} \bar y^1 + R_{1 \bar 2 r \bar s} \bar y^2 |^2 \\ & \geq |R_{1\bar1 s \bar s} \bar y^1 + R_{1 \bar 2 s \bar s} \bar y^2 |^2 \geq \tfrac1{m} | R_{1\bar1 } \bar y^1 + R_{1 \bar 2 }  \bar y^2 |^2
\end{align*}
Thus by using $|y^1| \leq 1$, we obtain the inequality 
\[\sqrt{Q(\Rm)(x, \bar x, y, \bar y)} \geq  \tfrac1{\sqrt{m}}| R_{1\bar1 } y^1 \bar y^1 + R_{1 \bar 2 } y^1 \bar y^2 |,\]
which yields \eqref{cross_term_Kaehler_lemma} for $\lambda \geq 4 \sqrt{m}$.
\end{proof}

\section{Heat kernel estimates for Ricci flows} \label{sec_hk}
Let $(M^n, g(t))$, $t \in [0, T]$, be a complete Ricci flow.
Hereafter we denote by $G(x, t; y, s)$, with $x, y \in M$, $0 \leq s < t \leq T$, the heat kernel
corresponding to the backwards heat equation coupled with the Ricci flow.
 This means that for any fixed $(x, t) \in M \times [0, T]$ we have
\bec \lb{heat_ker_def}
\big(\partial_s + \Delta_{y,s}\big) G(x,t; \,\cdot\,, \,\cdot\,) = 0 \qquad \text{and }
 \qquad \lim_{s \nearrow t}  G(x, t; \, \cdot, s) = \delta_{x}.
\eec
Then for any fixed $(y, s) \in M \times [0, T]$ one can compute that $G(\,\cdot\,, \,\cdot\,;
 y, s)$ is the heat kernel associated to the conjugate equation
\bec \lb{G_conj}
\big(\partial_t- \Delta_{x, t} - \scal_{g(t)} \big)G(\,\cdot\,, \,\cdot\,; y, s) = 0 \quad \text{and }
 \quad \lim_{t \searrow s} G(\, \cdot \,,t, y, s) = \delta_{y}.
\eec
Note that  in the literature it is more common to consider  the fundamental solution of the conjugate heat equation $\partial_t u +\Delta_{x, t} u -\scal_{g(t)} u = 0$. Hereafter $d_t$ and $d\mu_t$ will denote the Riemannian distance and the volume element, respectively, for the metric $g(t)$.

The goal of this section is to obtain Gaussian upper bounds for $G$. A crucial  fact for the proof is that the $L^1$-norm of $G(\cdot, t; y,s)$ is preserved under (\ref{G_conj}), that is,
\begin{equation} \label{eq_int_G_1}
 \int_M G(\cdot\, ,t;y,s) d\mu_t = 1 \qquad \text{for any}  \quad 0 \leq s < t \leq T.
\end{equation}
In the compact case, this follows from the following simple computation: fix some point $y \in M$ and time $s \in [0,T]$ and note that for any $t > s$ we have
\begin{multline*}
 \frac{d}{dt} \int_M G(\cdot\,,t;y,s) d\mu_t 
  = \int_M \Big( \big( \Delta_{x,t} + \scal_{g(t)} \big) G(\cdot\,, t; y,s) - G(\cdot\,,t; y,s) \scal_{g(t)}  \Big) d\mu_t 
  = 0.
\end{multline*}
The general case follows using an exhaustion and limiting argument (see e.g.~\cite[Cor.~26.15]{Chow3}).

%%%%%%%%%%%%%%
\bp \label{thm:heat}
For any $A > 0$, there is a constant $C = C(n, A) < \infty$ such that the following holds: Let $(M^n, g(t))$, $t \in [0, T]$, be a complete Ricci flow  satisfying 
\bec \lb{RF curv assum A/t}
   |{\rm Rm}_{g(t)}| < \frac{A}{t} \quad  \text{and} \quad \vol_{g(t)}\(B_{g(t)}(x,\sqrt{t})\) >
 \frac{t^{n/2}}{A}
 \eec
 for all $(x,t) \in M \times (0, T]$. Then
%\begin{equation} \label{eq_hk_bound_thm}
\[G(x,t;y,s) < \frac{C }{t^{n/2} } \exp \bigg({ - \frac{d^2_s(x,y)}{C t} }\bigg)
\qquad \text{for all} \qquad  0 \leq 2s \leq t \leq T.\]
%\end{equation}
\ep

\begin{proof}
Parabolic rescaling and application of a time-shift reduces the proof to the case in which $s = 0$ and $t = 1$.
Note that in this process the right-hand side of the second bound in (\ref{RF curv assum A/t}) may change by a controlled factor due to a volume comparison argument.
So, in summary, our goal will be to show that
\begin{equation} \label{eq_normalized_hk_bound}
 G(x,1; y,0) < C \exp \bigg({ - \frac{d_0^2 (x,y)}{C} }\bigg). 
\end{equation}

Take a subdivision $ \{[t_{k+1}, t_{k}]\}_{k \in \N \cup \{0\}}$ of the interval $(0,1]$ with $t_k := 16^{-k}$.
We first bound the heat kernel $G$ restricted to time-slabs of the form $M \times [t_{k+1}, t_k]$.
Notice that (\ref{RF curv assum A/t}) provides curvature and volume bounds, which are uniform in $k$ after a parabolic rescaling that normalizes the size of the time-interval $[t_{k+1}, t_{k}]$. Then by  \cite[Corollary 26.26]{Chow3} we find $C_0 = C_0 (n, A) < \infty$ such that for any $x, y \in M$ and $k \in \N \cup \{0\}$ we have
\begin{equation} \label{eq_std_hk_bound}
 G(x,t_{k} ; y, t_{k+1} ) \leq \frac{C_0}{(t_{k} - t_{k+1})^{n/2}} \exp \bigg( { - \frac{d^2_{\ast} (x, y)}{C_0 (t_{k} - t_{k + 1})} }\bigg),
\end{equation}
where for $d_\ast$ we can use either $d_{t_{k}}$ or $d_{t_{k+1}}$ due to a basic distance distortion argument on $[t_{k +1}, t_{k}]$. Fix a point $x \in M$.
By (\ref{eq_std_hk_bound}) for $d_\ast = d_{t_{k+1}}$ and $k = 0$  we get in particular
\begin{equation} \label{eq_k_1_hk_bound}
 G(x, 1; y, t_1) \leq  2^n C_0 \exp \bigg( { - \frac{d^2_{t_1} (x, y)}{C_0} }\bigg) \qquad \text{for any } \quad y \in M. 
\end{equation}
So by the maximum principle applied to the solution $G(x, 1; \cdot, \cdot)$ to (\ref{heat_ker_def}) we have
\begin{equation} \label{eq_unform_hk_bound}
 G(x, 1; \cdot, \cdot) \leq 2^n C_0 \qquad \text{on} \qquad M \times [0, t_1]. 
\end{equation}
This implies the desired bound \eqref{eq_normalized_hk_bound} if $d_{0} (x, y)$ is controlled. So it remains to estimate $G(x, 1; y, 0)$ whenever $d_{0} (x, y)$ is large.
For this purpose let $d > 0$ be an arbitrary constant and set for each $k \in \N$
\[ a_k := \sup_{M \setminus B_{g(0)} (x, r_k)} G(x, 1; \cdot, t_k) \qquad \text{with } \qquad r_k:= 4 d \cdot(1- 2^{-k}) \]
As $G(x, 1; \cdot, t)$ is continuous up to time $0$ we have
%\begin{equation} \label{eq_ak_limit}
\[ \sup_{M \setminus B_{g(0)} (x, 4d)} G(x, 1; \cdot, 0) = \lim_{k \to \infty} a_{k+1}. \]
%\end{equation}
Therefore the statement follows if we prove the following:

{\it Claim.} For some $C = C(n, A) < \infty$, we have $a_{k+1} \leq C \exp \ds \bigg( { - \frac{d^2}{C} }\bigg).$

\noindent It is enough to prove the claim for $d$ larger than some constant $\gorro C(n, A)$,  to be specified later. Indeed, for $d \leq \gorro C(n, A)$ we get (\ref{eq_normalized_hk_bound})   from (\ref{eq_unform_hk_bound}) after possibly adjusting $C$.

We will now iteratively bound the numbers $a_k$. For any $y \in M \setminus B_{g(0)} (x, r_{k+1})$ the reproduction formula for $G$ yields
\bec \lb{repr_G}
G(x, 1; y, t_{k + 1}) = \int_M G(x, 1; z, t_{k}) \, G(z,t_{k}; y, t_{k+1}) \, d\mu_{t_{k}}(z) =: \mc I[M].
\eec
We split the integral $\mc I[M]$ into  integrals over $B_k:= B_{g(t_k)}(y, 2^{-k}  d)$ and  over $M \setminus B_k$. In order to estimate $\mc I\big[B_k]$, we first bound $d_{t_k}$ in terms of $d_0$. By Hamilton's distance distortion bound (see \cite[Theorem 17.2]{HamFS} and Editor's note 24 in \cite{Coll}) and (\ref{RF curv assum A/t}) there is a constant $\Lambda = \Lambda(n) < \infty$ such that for any $z \in B_{g(0)} (x, r_{k})$ we have
\begin{align} \lb{Ham_dist}
 d_{t_{k}} (y,z) & \geq d_0 (y,z) - \Lambda \int_0^{t_{k}} \!\!\sqrt{\frac{A}{t}} \, dt \geq (r_{k+1} - r_{k}) - 2 \, \Lambda\, \sqrt{A t_{k}} 
 \geq 2   (d- \Lambda  \sqrt{A}) 2^{-k} \nn \\
 & \geq   2^{-k} d, \qquad \text{as long as } \quad d \geq 2 \, \Lambda \, \sqrt{A}. 
\end{align}
It follows that $B_k \subset M \setminus B_{g(0)} (x, r_{k})$, and hence 
\bec \lb{I_Omega}
\mc I\big[B_k] \leq \int_{M \setminus B_{g(0)} (x, r_{k})} \!\! G(x, 1; \, \cdot \,, t_{k}) \, G(\, \cdot \, ,t_{k}; y, t_{k+1}) \, d\mu_{t_{k}} \leq a_{k},
\eec
where we used the definition of $a_k$ and \eqref{eq_int_G_1}. On the other hand, by \eqref{eq_unform_hk_bound} we get
$$\mc I\big[M \setminus B_k\big] \leq 2^n C_0 \int_{M \setminus B_k}   G(\, \cdot \,,t_{k}; y, t_{k+1}) d\mu_{t_{k}} \leq  C_1 \exp \bigg({ - \frac{ 4^{-k} d^2}{C_1 (t_{k} - t_{k + 1})}} \bigg)$$
for some constant $C_1 = C_1(n, A) < \infty$. The last inequality follows easily by integrating \eqref{eq_std_hk_bound} with $d_\ast = d_{t_k}$ and  volume comparison. Substituting the latter estimate and \eqref{I_Omega} in \eqref{repr_G}, we obtain
%\begin{align*}
\begin{align} \lb{est_ak1}
 a_{k+1} &\leq a_{k} + \, \exp \bigg({ - \frac{d^2 \cdot 4^{-k}}{C_1 \cdot \frac{15}{16} \cdot 16^{-k}} }\bigg) \leq a_1 +  C_1 \sum_{i = 1}^{k} \exp \bigg({- \frac{d^2}{C_1} \cdot 4^i}\bigg) \nn
\\ & \leq 2^n C_0 \exp\bigg({-\frac{d_{t_1}^2(x,y)}{C_0}}\bigg) + C_1 \exp \big({- \frac{d^2}{C_1} }\big)  \sum_{i = 1}^k \exp \bigg({- \frac{d^2}{C_1} \cdot (4^i-1)}\bigg),
\end{align}
where $y \in M \setminus B_{g(0)}(x, 2d)$ and we have used \eqref{eq_k_1_hk_bound} to estimate $a_1$. Now, arguing as in \eqref{Ham_dist}, we deduce
$$d_{t_1}(x, y) \geq d_0(x, y) - \int_0^1 \sqrt{\tfrac{A}{t}}\, dt \geq 2 (d -  \Lambda \sqrt{A}) \geq d, \qquad \text{whenever} \quad d \geq 2 \Lambda \sqrt{A}.$$
Finally, for $d \geq \sqrt{C_1}$ we get  $\exp \big({- \frac{d^2}{C_1} \cdot (4^i-1)}\big)
\leq e^{-i}.$  Substituting this into \eqref{est_ak1} yields the inequality in the claim for  $d \geq \max\{ 2 \, \Lambda \sqrt{A}, \sqrt{C_1} \}$. This finishes the proof of the proposition. 
\end{proof}

 \section{Proof of the main results} \label{sec_main_proof}
\subsection{Ricci flow for almost non-negatively curved manifolds}

The main ingredient of the following proof is the use of the heat kernel estimates from Proposition \ref{thm:heat} to control the curvature growth.

\begin{proof}[{\bf Proof of Theorem \ref{thm:annco}.}]
Let $\mathcal{C} = \mathcal{C} (S, 0)$ and $\ell(p, t)$ be defined as in \eqref{eq_def_CC} and \eqref{def_ell} for $S = S_1$. If we rescale the metric by a large constant, we may assume that 
%\bec \lb{def_L0}
\[\ell(\cdot, 0) \leq \eps \leq \eps_0\]
%\eec
for some small $\eps_0 = \eps_0(n, v_0) >0$ to be conveniently chosen later. 
Now consider the maximal time interval $[0, t_1)$ such that the Ricci flow $g(t)$ exists and satisfies 
\bec \lb{stop_rule}
\ell(\cdot, t) \leq 1 \qquad \text{and}  \qquad \vol_{g(t)}\big(B_{g(t)}(\cdot, 1)\big) \ge v_0/2 
\eec
for all $t \in [0, t_1)$. 
Note that this implies that $\Ric_{g(t)} \geq - (n-1)g(t)$ for all $t \in [0, t_1)$.
By maximum principle arguments and standard ODE estimates, it follows easily that $t_1 > 0$. The goal is to find a constant $t_0 = t_0(n, v_0) >0$ such that $t_1 \geq t_0$. First notice that if $t_1 > 1$ we are done; hence we suppose hereafter that $t_1 \leq 1$.

We first claim that there exists a constant $C_1=C_1(n,v_0) > 0$ such that 
\bec \lb{Rm_Ct}
|\Rm_{g(t)} | \le  \frac{C_1}{t}  \qquad \text{for all} \quad t\in (0,t_1).
\eec
\noindent To prove \eqref{Rm_Ct} we argue by contradiction: otherwise, we find a sequence of Ricci flows $(M_i, g_i(t))_{t \in [0, T_i)}$ with $T_i \leq 1$ satisfying \eqref{stop_rule} for all $t \in [0, T_i)$ such that
\[ Q_i := \sup_{t \in (0,T_i)} \sup_{M_i} \big( t \cdot |\Rm_{g_i(t)} | \big)  \xrightarrow{i \to \infty} \infty. \]
Pick $(p_i, t_i) \in M_i \times (0, T_i)$ such that $| \Rm_{g_i(t_i)} (p_i) | \geq \frac12 \frac{Q_i}{t_i}$.
So if we consider the parabolic rescaling $\tilde g_i(t) = \frac{Q_i}{t_i} g_i(t_i + t \cdot \frac{t_i}{Q_i})$, then $|\Rm|_{\tilde g_i(0)}(p_i) \geq 1/2$.
Moreover, $| \Rm_{\tilde g_i (t)} | \leq 2$ and $\Rm_{\tilde g_i(t)} + \frac1{Q_i} \I \in \mc C$ for $t \in (-Q_i /2, 0]$. As as $Q_i \to \infty$, Hamilton's compactness theorem implies that a subsequence of these flows, pointed at $(p_i,0)$, converges to a non-flat ancient solution $\tilde g_\infty(t)$ with bounded curvature and satisfying  $\Rm_{\tilde g_\infty(t)} \in \mc C$ for all times. Combining the lower volume bound in \eqref{stop_rule} with Bishop-Gromov comparison, we deduce that the asymptotic volume ratio of the limit solution is positive.
This contradicts the fact that the volume ratio on a $\kappa$-solution vanishes (see \cite[11.4]{P1}) --- and shows \eqref{Rm_Ct}.

Second, by Proposition \ref{lem:ell} there is a dimensional constant $C_2 > 0$ such that $\ell$ satisfies
\[ \partial_t \ell \leq \Delta \ell + \scal \ell + C_2 \ell^2 \leq \Delta \ell + \scal \ell + C_2 \ell \]
in the viscosity sense.
So by the maximum principle, $\ell (\cdot ,t) \leq e^{C_2 t} h$ on $M \times [0,t_1)$, where $h$ is the solution to the initial value problem 
$$\partial_t h = \Delta h+ \scal  h, \qquad h(0,\cdot)\equiv \eps.$$
We can express this solution as 
\[
h(x,t)= \eps \int_{M} G(x,t;y,0) \,d\mu_{0}(y),
\]
where $G(\cdot,\cdot\,;y,s)$ is the heat kernel defined in \eqref{G_conj}.  By \eqref{stop_rule} and volume comparison, we obtain that $\vol_{g(t)}\(B_{g(t)}(x,\sqrt{t})\) \geq c_0 v_0 t^{n/2}$ for some dimensional constant $c_0 > 0$.
Because of this and \eqref{Rm_Ct} we are now in position to apply Proposition \ref{thm:heat} to conclude for some constants $C_3 = C_3 (n, v_0) > 0$ and $C_4 = C_4 (n, v_0) > 0$
\begin{equation} \label{eq_ell_eps_bound}
\ell(x, t) \leq e^{C_2} h(x, t) \leq  \eps \cdot \frac{C_3}{t^{n/2}} \int_M \exp\Big({-\frac{d_0^2(x, y)}{C_3t} }\Big) \, d\mu_{0}(y) \leq   \eps C_4 \leq  \eps_0 C_4 \,
\end{equation}
for all $(x, t) \in M \times (0, t_1)$. A suitable choice of $\eps_0 = \eps_0(n, v_0) > 0$ ensures that $\ell \leq 1/2$ on $M \times [0, t_1)$.
This bound and \eqref{Rm_Ct} imply that the curvature bound in \eqref{stop_rule} holds on a time-interval that is larger than $[0,t_1)$.

Lastly, we turn our attention to the volume condition in \eqref{stop_rule}. Our curvature condition in \eqref{stop_rule} combined with \eqref{Rm_Ct} and Hamilton's trick give us the appropriate double side control on $d_{g(t)}$ in terms of $d_{g(0)}$, which arguing exactly as in \cite[Corollary 6.2]{Simon12}, implies the existence of a time $\tau = \tau (n, v_0) > 0$ such that
$$\vol_{g(t)}(B_{g(t)}(p, 1)) \geq 2 v_0/3 \qquad \text{for all} \quad t \in [0, \min \{ \tau, t_1 \}).$$
This bound and \eqref{Rm_Ct} imply that the volume condition of \eqref{stop_rule} holds on a time-interval that is larger than $[0,t_1)$, unless $t_1 \geq \tau$.

Combining the results of the previous two paragraphs, we obtain by the maximal choice of $t_1$ that $t_1 \geq \tau$.
The theorem now follows from \eqref{Rm_Ct} and \eqref{eq_ell_eps_bound} by undoing the initial parabolic rescaling.
\end{proof}

\subsection{Volume growth of weakly $\bm{\text{PIC}_1}$ ancient solutions}
In this subsection, we generalize Perelman's analysis of $\kappa$-solutions (see \cite[11.4]{P1}), which have non-negative curvature operator, to the weakly $\text{PIC}_1$-case.
These generalizations will only be used in the almost $\text{PIC}_1$ case and the almost 2-nonnegative curvature cases of Theorem~\ref{thm:ann_gral}.

\begin{lem} \label{lem_v_decomposition}
For any $v \in \so (n, \C)$ of rank $2$ and with eigenvalues of norm $\alpha$ there are
 $u, w \in \so (n, \C)$ with $v = u + w$ such that $|u| = \alpha$ and such that every linear combination $u + sw$, $s \in \R$ has rank $2$ and eigenvalues of norm $\alpha$.
\end{lem}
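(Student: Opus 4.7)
The plan is to find the decomposition in the form
\[u := x\wedge(y+\eta),\qquad w := -x\wedge\eta,\]
where $v=x\wedge y$ (this is possible since $v$ has rank $2$) and $\eta\in\C^n$ is an auxiliary vector to be chosen. With this ansatz $u+w=v$ and $u+sw = x\wedge\bigl(y+(1-s)\eta\bigr)$ for every $s\in\R$, which is decomposable, hence rank $\leq 2$, and rank exactly $2$ under a generic condition on $\eta$.

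First, I would impose on $\eta$ the three complex-bilinear identities
\[\langle x,\eta\rangle = \langle y,\eta\rangle = \langle\eta,\eta\rangle = 0,\]
where $\langle\cdot,\cdot\rangle$ denotes the standard complex-bilinear extension of the real inner product on $\R^n$. Recall that the nonzero eigenvalues of $x\wedge y$ are $\pm\lambda$ with $\lambda^2 = \langle x,y\rangle^2 - \langle x,x\rangle\langle y,y\rangle$, so $|\lambda|^2=\alpha^2$. Expanding the analogous formula for $u+sw = x\wedge\bigl(y+(1-s)\eta\bigr)$ and using the three identities to kill every cross term containing $\eta$, one finds that its eigenvalue-squared equals $\lambda^2$ independently of $s$. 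Thus $u+sw$ has eigenvalues of norm $\alpha$ for every $s\in\R$, and so does $u$ at $s=0$.

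Next, I would enforce that the Hermitian norm satisfies $|u|=\alpha$. By the Gram-determinant formula,
\[|u|^2 = |x|^2\,|y+\eta|^2 - \big|\langle x,y+\eta\rangle_H\big|^2,\]
where $\langle\cdot,\cdot\rangle_H$ is the Hermitian inner product on $\C^n$. This is a single real equation in $\eta$, continuous on the affine variety $Z\subseteq\C^n$ cut out by the three bilinear identities. At $\eta=0$ it equals $|v|^2\geq\alpha^2$ (one checks this directly in the normal form \eqref{eq_std_matrix_form} via $a^2+b^2+c^2+d^2\geq|a^2+b^2-c^2-d^2|$). By the intermediate value theorem along a path in $Z$, it then suffices to exhibit a single $\eta\in Z$ for which $|u|^2\leq\alpha^2$.

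The main obstacle is producing this last $\eta$. I expect to bring $v$ to the normal form \eqref{eq_std_matrix_form} via an $\Or(n,\R)$-conjugation, which preserves both the bilinear and Hermitian structures; factor $v=x\wedge y$ explicitly in terms of the parameters $a,b,c,d$; and then take $\eta$ supported on auxiliary basis vectors $e_5,e_6,\ldots$ outside the $4\times 4$ block containing $v$. These vectors supply many bilinear-isotropic directions in $Z$; for a suitable such $\eta$ the linear term in $|u(\eta)|^2-|v|^2$ has a favorable sign, and an appropriate scaling then drives $|u|^2$ below $\alpha^2$. Low-dimensional edge cases where $Z$ is too small to contain such an $\eta$ would be handled by a separate direct inspection of the very constrained normal form.
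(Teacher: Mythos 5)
Your framework---factoring $v = x\wedge y$, setting $u := x\wedge(y+\eta)$, $w := -x\wedge\eta$, and choosing $\eta$ so that the characteristic coefficients of $u+sw$ are $s$-independent---is a genuinely different route from the paper's, which conjugates $v$ into an explicit $4\times 4$ normal form, reads off $v_{12}=\pm i$, $v_{13}=\pm i v_{23}$, $v_{14}=\pm i v_{24}$, $v_{34}=0$ from the eigenvector and rank conditions, and then takes $u$ to be the upper $2\times 2$ block. Your reduction of the eigenvalue condition to the three bilinear identities on $\eta$ is correct. However, the last step has a genuine gap, in two parts.

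First, the intermediate value reduction is vacuous. For every $\eta$ in your variety $Z$, $u(\eta)$ is skew-symmetric of rank $\le 2$ with eigenvalues $\pm\lambda$, $|\lambda|=\alpha$. Writing its two nonzero singular values as $\sigma_1\ge\sigma_2>0$, one has
\[
|u(\eta)|^2 = \tfrac12\bigl(\sigma_1^2+\sigma_2^2\bigr)\;\ge\;\sigma_1\sigma_2\;\ge\;|\lambda|^2=\alpha^2,
\]
the last inequality because $\sigma_1\sigma_2$ is the operator norm of $\Lambda^2 u(\eta)$, which dominates its spectral radius $|\lambda^2|$. So $|u(\eta)|^2\ge\alpha^2$ holds \emph{identically} on $Z$; ``exhibiting $\eta\in Z$ with $|u(\eta)|^2\le\alpha^2$'' is literally the same as finding the exact minimizer $|u(\eta)|^2=\alpha^2$. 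The intermediate value theorem contributes nothing, and there is no ``favorable sign of the linear term'' to drive $|u|^2$ below $\alpha^2$.

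Second, your proposed $\eta$ supported on $e_5,e_6,\dots$ outside the $4\times4$ block can only push $|u|$ up, not down. Such an $\eta$ is Hermitian-orthogonal to $x$ and $y$, so the Gram determinant splits as $|u(\eta)|^2 = |v|^2 + |x|_H^2\,|\eta|_H^2$, strictly increasing in $|\eta|_H$. To actually hit the minimum $\alpha^2$ you must perturb inside the $4\times4$ block, and then whether $Z$ contains a minimizer turns out to depend delicately on which factorization $v=x\wedge y$ you pick: for some choices of $x$ the infimum of $|u(\eta)|^2$ over $Z$ stays strictly above $\alpha^2$. (Your three conditions are sufficient but not necessary---the necessary ones are $\langle x,y\rangle\langle x,\eta\rangle=\langle x,x\rangle\langle y,\eta\rangle$ and $\langle x,\eta\rangle^2=\langle x,x\rangle\langle\eta,\eta\rangle$, which are much weaker when $\langle x,x\rangle=0$.) Identifying the correct $x$ and then the minimizing $\eta$ is where the real work lies, and the sketch does not supply it; the paper sidesteps all of this by producing $u$ directly from the normal form.
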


\begin{proof}
If $\alpha = 0$, then we automatically have $v^3 = 0$.
So we can set $u = v$ and $w = 0$.

Assume now that $\alpha > 0$.
By multiplying $v$ with an appropriate complex number of norm $\alpha^{-1}$, we can reduce the lemma to the case in which $v$ has eigenvalues $\pm 1$.
Let $x^\pm = x^\pm_1 + i x^\pm_2 \in \C^n$ be the corresponding eigenvectors.
By conjugating $v$ with a real-valued orthogonal matrix, we can assume without loss of generality that $x^+_1, x^+_2 \in \spann \{ e_1, e_2 \}$ and $x^-_1, x^-_2 \in \spann \{ e_1, \ldots, e_4 \}$, where $e_1, \ldots, e_n$ denote the standard basis vectors of $\C^n$.
So $v$ takes the form
\[
v = \begin{pmatrix}
  \begin{matrix}
  0 & v_{12} & v_{13} & v_{14}  \\-v_{12} & 0 & v_{23} & v_{24}  \\-v_{13} & -v_{23} & 0 & v_{34}  \\-v_{14} & -v_{24} & -v_{34} & 0
  \end{matrix}
  & \rvline &  \bigzero  \\
\hline 
 \parbox[c]{0.4cm}{\bigzero} & \rvline &
  \begin{matrix}
  \parbox[c]{0.4cm}{\bigzero} 
  \end{matrix}
\end{pmatrix}
\]

Due to the eigenvalue equation $v x^+ = x^+$ we must have $v_{12} = \pm i$, $v_{13} = \pm i v_{23}$ and $v_{14} = \pm i v_{24}$.
So there is a non-trivial linear combination of the first two columns that only has non-zero entries in the first two coordinates.
As $\rank v = 2$, this implies that $v_{34} = 0$.

Now let $u$ be the matrix that agrees with $v$ on the upper $2\times 2$-block and has zero entries everywhere else and set $w := v - u$.
It is not hard to verify the $u$ and $w$ have the desired properties.
\end{proof}

We can now prove the main result of this subsection.

 \begin{lem}\label{lem:ancient} Let $(M^n,g(t))_{t\in (-\infty,0]}$ be a nonflat ancient solution of the Ricci flow with bounded curvature satisfying weakly $\emph{PIC}_1$. Then it 
 has nonnegative complex sectional curvature. 
 Furthermore, the volume growth
 is non-Euclidean, i.e. $\ds \lim_{r \to \infty} r^{-n} \vol B_{g(0)} (x, r) = 0$ for all $x \in M$.
 \end{lem}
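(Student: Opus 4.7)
The proof has two conclusions to establish, and I would treat them in turn.

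For the first conclusion (weakly $\text{PIC}_2$), define
\[
\tilde\ell(p,t) := \inf\{\alpha \geq 0 \;:\; \Rm_{g(t)}(p) + \alpha \, \I \in \mc C(S_5)\},
\]
which measures the pointwise failure of non-negative complex sectional curvature. The $S = S_5$ case of Proposition~\ref{lem:ell} then yields the barrier-sense inequality
\[
\partial_t \tilde\ell \leq \Delta \tilde\ell + \scal \cdot \tilde\ell + C \tilde\ell^2.
\]
Weakly $\text{PIC}_1$ implies non-negative (indeed $3$-non-negative) Ricci curvature via the claim in case $S_3$ of Proposition~\ref{lem:ell} applied to the subcone $\mc C(S_4) \subset \mc C(S_3)$, so $\scal \geq 0$; the bounded curvature hypothesis provides $\tilde\ell \leq L$ and $\scal + C\tilde\ell \leq K$ uniformly on the flow. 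Arguing by contradiction, suppose $\tilde\ell(p_0,0) > 0$. My plan is to combine Hamilton's strong maximum principle for the curvature evolution with the algebraic content of Lemma~\ref{lem_v_decomposition}: a violating direction $v\in S_5$ generates the affine line $\{u+sw\}\subset S_5$ of rank-$2$ elements of fixed eigenvalue norm, on which $\Rm(u+sw,\overline{u+sw})$ restricts to a real quadratic in $s$. This should force any $\text{PIC}_2$ violation to propagate along a Ricci-flow-invariant parallel distribution, which one then iterates backward in time using the ancient nature of the flow, contradicting the boundedness $\tilde\ell \leq L$ (or alternatively the preservation of weakly $\text{PIC}_1$). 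I expect this rigidity step to be the principal technical obstacle, since $\text{PIC}_2$ is strictly stronger than $\text{PIC}_1$ and the evolution inequality alone is insufficient for a direct heat kernel argument in the absence of a volume non-collapsing bound.

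For the second conclusion (non-Euclidean volume growth), once weakly $\text{PIC}_2$ is in hand the manifold has non-negative sectional curvature (apply the inequality $\Rm(v,\bar v)\geq 0$ to $v = e + i f$ for orthonormal real $e \perp f$). I would then mirror Perelman's argument from \cite[11.4]{P1}: if the asymptotic volume ratio $V_\infty := \lim_{r\to\infty} r^{-n} \vol_{g(0)}(B_{g(0)}(x,r))$ were positive, then $V_\infty>0$ together with non-negative Ricci provides non-collapsing at large scales, and the Cheeger-Colding volume-cone-implies-metric-cone theorem forces the tangent cone of $(M, g(0))$ at infinity to be a non-trivial metric cone $C(Y)$ of full $n$-dimensional Euclidean volume. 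Using bounded curvature and Shi's derivative estimates, one takes a pointed Cheeger-Gromov limit of parabolic rescalings $(M, \lambda_i^{-2} g(\lambda_i^2 t), x_i)$ along $x_i \to \infty$; the limit is a smooth ancient Ricci flow whose initial slice contains the radial line of $C(Y)$, so Hamilton's splitting theorem splits off an $\R$-factor on the entire limit flow. Iterating on the remaining factor (which inherits all the required hypotheses) eventually forces the limit to be flat, while the volume lower bound from $V_\infty>0$ keeps the limit non-trivial; combined with Hamilton's compactness, this contradicts the non-flatness of the original ancient solution.
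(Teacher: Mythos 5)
There is a genuine gap in your argument for the first conclusion. You define $\tilde\ell$ and correctly observe that the evolution inequality from Proposition~\ref{lem:ell} with $\scal\geq 0$ and bounded curvature gives only a uniform bound $\tilde\ell\leq L$; but the passage from there to $\tilde\ell\equiv 0$ is exactly the part you leave open, and the strategy you gesture at --- a rigidity/propagation argument via Hamilton's strong maximum principle and a ``Ricci-flow-invariant parallel distribution'' --- does not get off the ground. Hamilton's strong maximum principle concerns the zero set of a curvature quantity that is already known to be non-negative; here you do \emph{not} yet know $\text{PIC}_2\geq 0$, which is precisely what you are trying to prove, so there is no sign condition to propagate. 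The paper does something quite different: following~\cite{WLie} it introduces a continuous, nested family of $O(n)$-invariant convex sets $C(t)\subset S^2_B(\so(n))$, $t\geq 0$, with $C(0)$ equal to the weakly $\text{PIC}_1$ cone and $\bigcap_{t>0}C(t)$ equal to the weakly $\text{PIC}_2$ cone, and then proves that $(C(t))_{t\geq 0}$ is invariant under the ODE $\Rm'=2Q(\Rm)$ by deriving $\lambda'\geq 2\lambda^2$ for a suitable infimum $\lambda(t)$; here Lemma~\ref{lem_v_decomposition} enters to produce the first-order relation $\Rm(u,\bar w)+\Rm(w,\bar u)+2\Rm(w,\bar w)=0$ at a minimizing $v=u+w$, giving $\lambda(t)=\Re\Rm(v,\bar u)$ and hence $|\Rm(v)|^2\geq\lambda^2$. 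The dynamical (ODE--PDE) maximum principle, applicable because of the bounded-curvature hypothesis, then shows $\Rm_{g(0)}\in C(t_0)$ for every $t_0>0$ after shifting the ancient flow, so $\Rm_{g(0)}\in\bigcap_{t_0>0}C(t_0)$, i.e.\ weakly $\text{PIC}_2$. This quantitative ``pinching family'' mechanism is the missing key idea in your proposal; the $\tilde\ell$-PDE by itself carries no information forcing $\tilde\ell\to 0$ on an ancient flow.

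For the second conclusion your Perelman-style blow-down argument is in the right spirit, and a version of it would work, but the paper simply invokes~\cite[Lemma~4.5]{CRW}, which already gives vanishing asymptotic volume ratio for bounded-curvature ancient solutions with nonnegative complex sectional curvature. Your sketch is more elaborate than needed and glosses over a couple of points (e.g.\ Hamilton's splitting theorem requires knowing where the zero set of a nonnegative curvature quantity sits, and the iteration over dimensions needs care at the endpoint), so even granting the first conclusion it is better to cite the reference than to reprove it.
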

 \begin{proof} 
 For all $t \geq 0$ we define 
  \[
 C(t):=\bigg\{\Rm\in S^2_{B}(\so(n)) \;\Big|\;  \Rm (v,\bar v)\ge -1 \begin{array}{l}\mbox{ for all $v \in \so(n,\C)$ with $\rank v = 2$ }\\\mbox{ and eigenvalues of norm $\leq \sqrt{2t}$}\end{array} \hspace{-2mm}\bigg\}
 \]
 As explained in  \cite[sec~4]{WLie}, the family $(C(t))_{t\in[0,\infty)}$ is continuous in $t$ and $C(0)$ is the set of weakly $\text{PIC}_1$ curvature operators.
 Note that $(C(t))_{t\in[0,\infty)}$ is monotone in the sense that $C (t_2) \subset C(t_1)$ whenever $t_1 \leq t_2$.
 We claim that $(C(t))_{t\in[0,\infty)}$ is invariant under the ODE: $\Rm'=2 Q(\Rm)$.
 That is if  $\Rm(t)$ is a solution to this ODE with $\Rm(t_0)\in C(t_0)$, then $\Rm(t)\in C(t)$ for all $t\ge t_0$.
 To prove our claim, it suffices to consider the case in which $\Rm (t_0)$ lies in the interior of $C(t_0)$.
 
We then define 
\begin{equation} \label{eq_lambda_inf}
 \lambda(t) := \inf \left\{\Rm(t) (v, \bar v) \;\Big|\;  \begin{array}{l}\mbox{$v \in \so(n,\C)$ with $\rank v = 2$ }\\\mbox{and eigenvalues of norm $\leq 1$}\end{array} \hspace{-2mm}\right\} 
\end{equation}
Note that $\Rm (t) \in C(t)$ if and only if $\lambda(t) \geq - \frac1{2t}$.
Let us now derive a differential inequality for $\lambda (t)$.

Fix $t > t_0$ for a moment and assume that $\Rm := \Rm (t)$ is still contained in the interior of $C(t_0)$.
So $\Rm$ is also contained in the interior of $C(0)$ and therefore $\Rm (v, \bar v) > 0$ for all non-zero nilpotent rank 2 matrices $v \in \so (n, \C)$ with eigenvalues zero.
We now claim that the infimum in (\ref{eq_lambda_inf}) is attained.
To see this, consider a minimizing sequence $v_i \in \so (n, \C)$ with $\rank v_i = 2$ and eigenvalues of norm $\leq 1$ such that $\lim_{i \to \infty} \Rm (v_i, \bar v_i ) = \lambda (t)$.
If $|v_i|$ remains bounded, then by compactness, we can pass to a subsequential limit $v_\infty \in \so (n, \C)$ with $\Rm (v_\infty, \bar v_\infty) = \lambda (t)$.
Assume now that $|v_i| \to \infty$ and let $v'_\infty$ be a subsequential limit of $v_i / |v_i|$.
Then $v'_\infty$ has rank 2,  eigenvalues 0 and by the choice of the $v_i$ we have $\Rm ( v'_\infty, \bar v'_\infty ) = 0$, contradicting our previous conclusion.

So we can choose $v \in \so (n, \C)$ as in (\ref{eq_lambda_inf}) with $\lambda (t) = \Rm (v, \bar v)$ and consider a splitting $v = u + w$ as in Lemma \ref{lem_v_decomposition}.
By the choice of $v$ the function $s \mapsto \Rm ( u + sw, \bar u + s \bar w)$ attains a minimum at $s =1$, which implies
\[ \Rm (u, \bar w) + \Rm (w, \bar u ) + 2 \Rm (w, \bar w ) = 0. \]
It follows that
\[ \lambda (t) = \Rm (u , \bar u ) + \tfrac12 \Rm (u, \bar w) + \tfrac12 \Rm (w, \bar u) = \Re \big( \Rm (v, \bar u ) \big). \]
Now $|u| \leq 1$ gives $| \Rm (v) |^2 \geq | \Rm (v, \bar u )|^2 \geq \lambda^2 (t)$.
Furthermore, it follows from the proof of \cite[Theorem~1]{WLie} that $\Rm^\# (v, \bar v) \geq 0$.
So we obtain that in the barrier sense
\[ \frac{d}{dt^-} \lambda(t) \geq 2 | \Rm (v) |^2 + 2 \Rm^\# (v, \bar v ) \geq 2 \lambda^2(t). \]
By ODE-comparison, this proves the invariance of $C(t)$ under the ODE:  $\Rm' = 2 Q(\Rm)$.
 
Since $(M,g(t))$ has bounded curvature, we can apply a dynamical version of the maximum principle (cf.~\cite[12.37]{Chow2}) to $(M,g(t-t_0))_{t\in [0, t_0]} $ for any $t_0 > 0$.
Since $\Rm_{g(-t_0)} \in C(0)$, we have $\Rm_{g(0)} \in C(t_0)$.
Letting $t_0 \to \infty$ implies that $(M,g(0))$ has nonnegative complex curvature. 
By shifting the flow we get the same the result for each metric $g(t)$. 

The claim about the volume growth now follows from \cite[Lemma 4.5]{CRW}.
 \end{proof}

\subsection{Proofs of the remaining global results}

\begin{proof}[{\bf Proof of Theorem \ref{thm:ann_gral}.}]
The proof is almost exactly the same as the proof of Theorem~\ref{thm:annco}.
In the Riemannian case, we define $\ell(p,t)$ as in \eqref{def_ell} for the appropriate $S_i$, $i = 2, 4, 5$.
In each case the bound $\ell \leq 1$ implies a lower bound on the Ricci curvature.
So all computations from the previous proof carry over.
To obtain (\ref{Rm_Ct}) observe that each curvature condition implies weakly $\text{PIC}_1$, and therefore we get a contradiction by using Lemma~\ref{lem:ancient} instead of \cite[11.4]{P1}.

 To prove the K\"ahler case one simply follows the same arguments but using $\tilde \ell$ from \eqref{def_tilde_ell} instead of $\ell$ and Proposition~\ref{lem:ell_Kaehler} instead of Proposition~\ref{lem:ell}. For complete K\"ahler manifolds with bounded curvature, recall that short-time existence is guaranteed by \cite[Theorem 5.1]{Shi}, and hence one can find as before a starting time interval $[0, t_1)$ where \eqref{stop_rule} holds. Then the only difference is that in order to get the contradiction at the end of the proof of \eqref{Rm_Ct}, we need to use \cite[Theorem~2]{Ni} instead of Lemma \ref{lem:ancient}.
\end{proof}

\begin{proof}[{\bf Proof of Corollary \ref{anco_nnc}}]
We argue by contradiction: If the statement was false, then we can find a sequence of counterexamples, that is, a sequence of closed Riemannian $n$-dimensional manifolds $\{(M_i, g_i)\}_{i \in \mathbb N}$ satisfying 
\bec \lb{counter_cond}
\vol_{g_i}(M_i) \geq v_0, \qquad \Rm_{g_i} + \frac1{i} \I \in \mc C\qquad \text{and} \qquad  {\rm diam}_{g_i}(M_i) \leq D.
\eec
so that each $M_i$ admits no metric $\bar g_i$ with $\Rm_{\bar g_i} \in \mc C$.

 The volume and curvature conditions in \eqref{counter_cond} allow us to use Theorems \ref{thm:annco} or \ref{thm:ann_gral} to deduce that there exists a sequence of Ricci flows $\{(M_i, g_i(t))\}_{t \in [0, \tau]}$ with curvature control $|\Rm |_{g_i(t)} \leq \frac{C}{t}$ for positive times, where $\tau$ and $C$ are independent of $i$. The latter curvature control and the volume bound in \eqref{counter_cond} yield, by means of Hamilton's compactness theorem, a limiting Ricci flow $(M_\infty, g_\infty(t))_{t \in (0, \tau]}$. By definition of Cheeger-Gromov convergence the curvature condition in \eqref{counter_cond} passes to the limit, and hence $\Rm_{g_\infty(t)} \in \mc C$. Finally the uniform diameter bound in \eqref{counter_cond} ensures that $M_\infty$ is diffeomorphic to $M_i$ for all $i$ large enough. 
This, however, contradicts our choice of $M_i$. The same proof works in the K\"ahler case.
\end{proof}

\begin{proof}[{\bf Proof of Corollary \ref{anco_smooth}}]
Arguing as in the previous proof, we get a limiting Ricci flow $(M_\infty, g(t))_{t\in (0, \tau]}$ with $\Rm_{g(t)} + \eps_\infty \I \in \mc C$ for all $t > 0$. The proof of \eqref{init_sing} follows as in \cite[Theorem 7.2]{Simon09} by applying twice the triangle inequality of the Gromov-Hausdorff distance combined with a two-sided distance distortion control on $d_{g(t)}$.

Lastly, let us show that $X$ is homeomorphic to $M_\infty$.
As the Ricci curvature of $g(t)$ is uniformly bounded from below and we have a bound of the form $|{\Rm_{g(t)}}| \leq C/t$ for some generic $C <\infty$, due to (\ref{curv_bdd_thm2}), we have a distance distortion bound of the form
\[ e^{-C(t_2 -t_1)} d_{g(t_2)} (x,y)  \leq d_{g(t_1)} (x,y)  \leq  d_{g(t_2)} (x,y)+ C \sqrt{t_2},  \]
for any $x, y \in M_\infty$ and $0 < t_1 < t_2$.
So the limit
\[ d_0 (x,y) := \lim_{t \searrow 0} d_{g(t)}(x,y) \]
exists and for all $t > 0$
\begin{equation} \label{eq_d0_dgt}
 e^{-Ct} d_{g(t)} (x,y) \leq d_0 (x,y) \leq d_{g(t)} (x,y) + C \sqrt{t}. 
\end{equation}
Hence $(M_\infty, d_0)$ is a metric space, which due to (\ref{init_sing}) is isometric to $(X, d_X)$.
Fix some $t > 0$.
By the first inequality in (\ref{eq_d0_dgt}), the identity map $(M_\infty, d_0) \to (M_\infty, d_{g(t)})$ is continuous. 
To see that its inverse is continuous, consider a convergent sequence $x_i \to x_\infty$ in $(M_\infty, d_{g(t)})$.
Fix some $\eps > 0$ and choose $t' := \(\frac{\eps}{2 C}\)^2$.
For large $i$ we have $d_{g(t')} (x_i, x_\infty) < \eps / 2$.
Thus by the second inequality in (\ref{eq_d0_dgt}), we have $d_0 (x_i, x_\infty) < \eps$.
As $\eps$ was chosen arbitrarily, this shows that $x_i \to x_\infty$ in $(M_\infty, d_0)$.
So the identity map $(X, d_X) \cong (M_\infty, d_0) \to (M_\infty, d_{g(t)})$ is a homeomorphism, which finishes the proof.
\end{proof}

\section{Proof of the local statement} \label{sec_loc_Thm}
We will now prove Theorem~\ref{anco_local}.
The idea is to reduce the proof to an application of Theorem \ref{thm:ann_gral} after a modification of the original metric by a conformal change that pushes the boundary of the relevant region, on which we have curvature bounds, to infinity in such a way
that the modified metric is complete and has bounded curvature. The conformal factor we use is a modification of that for the hyperbolic metric on a unit Euclidean ball. 

\begin{proof}[{\bf Proof of Theorem~\ref{anco_local}.}]
After rescaling, we can assume without loss of generality that $r = 1$. 
Hereafter we use the notation $B_R := B_R(U)$ for the $R$-tubular neighborhood around $U$.
Our curvature assumption on $B_1$ implies that $\sec \geq - \eps$ and therefore we can apply the Hessian comparison theorem to the Riemannian distance function $d_p$ to any point $p\in U$. 
We obtain that $d_p$ satisfies on $B_1$
$$\nabla^2 d^2_p \leq 2\sqrt{\eps} \coth(\sqrt{\eps} d_p) d_p \cdot g \leq C_0 g . $$
At the points where $d^2_p$ is not smooth, the first inequality is understood in the barrier sense.  Then $d_U:= \inf\{d_p(\cdot) \, |\, p \in U \}$ also satisfies $\nabla^2 d_U^2 \leq C_0 g$ on $B_1$ in the barrier sense. 
 Using the approximation technique of Greene-Wu (see 
\cite[p.~644]{GW1}, \cite[p.~60]{GW2},  and \cite[Lemma 8]{GW3}) we can construct a smooth function $\rho : B_{1 - 1 / 8} \fle \R$ such that
\begin{enumerate}[label=(\alph*)]
\item $|\rho - d^2_U | < 1 / 16$.
\item $|\nabla \rho | < 4$.
\item $\nabla^2 \rho < 2C_0  g$.
\end{enumerate}
By (a) the sublevel set $\{ \rho < 1 - 1 / 8 \}$ is disjoint from $\partial B_{1 - 1 / 32}$ and contains $U$.
Let $V$ be the connected component of this sublevel set that contains $U$.

Next we will perform a conformal change of the metric by pushing the boundary $\partial V$ to infinity, but keeping some bounds on the curvature operator and the volume. 
With this goal in mind, let $\varphi: [0, \infty) \to [0, 1]$ be a smooth cut-off function with the following properties:
\begin{enumerate}[label=(\roman*)]
\item $\varphi(s) = 0$ for $s \leq 1- 1/4$.
\item $\varphi(s) > 0$ for $s > 1 - 1 / 4$.
\item $\varphi(s) = 1$ for $s \geq 1- 1/8$.
\item $\varphi' \geq 0$ and $|\varphi'|, |\varphi''|\leq C$.
\end{enumerate}
We now consider the following conformal change of our original metric on $V$:
%\bec \lb{tildeg}
\[\hat g = \Phi^2 g, \qquad \text{where} \qquad \Phi := \frac1{1 - (\varphi \circ \rho)}.\]
%\eec
By (a) above, we have $B_{1- 1 / 2} \subset \{ \rho < 1- 1 / 4 \} \cap V$, and thus $\hat g \equiv g$ on $B_{1 - 1/2}$. 
Moreover, $\Phi^{-1}$ goes to zero near $\partial V$.
So 
In order to show that $(V, \hat g)$ is complete, consider a smooth curve $\gamma : [0,l) \to V$ parameterized by arclength with respect to $\hat g$.
This implies that $\Phi (\gamma(s)) |\gamma'(s)| = 1$ and thus $|\gamma'(s)| = \Phi^{-1} (\gamma(s)) \leq 4 C d_g ( \gamma(s), \partial V)$ by the fact that $\Phi^{-1}$ is $4 C$-Lipschitz.
Therefore, if $l < \infty$, then $\lim_{s \to l} \gamma(s) \in V$.

Next, set $f:=  \log(\Phi)$ and note that the curvature operator of $\hat g$, viewed as a $(0,4)$-tensor, can be expressed as
\begin{equation} \label{eq_conformal_change_formula}
\Rm_{\hat g} = e^{2 f} \big(\Rm_g - A \owedge g \big), \quad \text{for } \quad A:= \nabla^2  f - df \otimes df + \frac1{2} |\nabla f|^2 g.
\end{equation}
Fix some point $x \in V$ and let $\{e_i\}_{i = 1}^n$ be an orthonormal basis for $(T_x M,  g_x)$ that diagonalizes $A$ with eigenvalues $a_i = A(e_i, e_i)$.
Then for any $v =  v_{ij} e_i \wedge e_j$ with $|v|_{\hat g} = \Phi^2 |v |_g  \leq 1$ we compute, using the Kulkarni-Nomizu product $\owedge$ as defined in (\ref{def_KN}),
 \[ \big( \Rm_{\hat g} - \Phi^{2} \Rm_g \big) (v, v)  = - \Phi^{2}(A \owedge g) (v,v) 
 = -4 \Phi^{2}  a_i \, v_{ij} v_{ij} \geq - 4 \Phi^{-2} \max_{1 \leq i \leq n} a_i.\]

Using that 
$$d f = \Phi \cdot (\varphi' \circ \rho) \, d \rho \qquad  \nabla^2 f = \Phi \cdot (\varphi' \circ \rho) \, \nabla^2 \rho  + \big( \Phi \cdot ( \varphi'' \circ \rho ) + \Phi^2 \cdot (\varphi ' \circ \rho)^2 \big) \, d\rho \otimes d \rho,$$
and, taking into account that $\Phi \geq 1$ on $V$, we can estimate
\[
\Phi^{-2} \, a_i \leq \Phi^{-2} \big(  \nabla^2 f(e_i, e_i) + |\nabla f|_g^2 \big) \leq C(n),
\]
where we have used condition (iv) in the definition of $\varphi$ and property (c).
It follows that $\Rm_{\hat g} - \Phi^{2} \Rm_g + C(n) \I_{\hat g}$ is non-negative definite.
Hence, since $\I_{\hat g} = \Phi^4 \I_{g}$ and $\Phi \geq 1$,
\begin{multline*}
 \Rm_{\hat g} + (C(n) + \eps) \cdot \I_{\hat g}  = \eps \Phi^2 (\Phi^2 - 1 ) \cdot \I_{ g} + \Phi^{2} (\Rm_g + \eps  \I_{ g}) \\
 + \big(\Rm_{\hat g} - \Phi^{2} \Rm_g + C(n) \cdot \I_{\hat g} \big) \in \mc C. 
\end{multline*}
So if we define $\hat{\ell}$ as in (\ref{def_ell}) for $\hat g$ instead of $g$, then
\[ \hat\ell \leq \eps + C(n) . \]

As $\overline{V}$ is compact, we obtain that $e^{-2f} |\nabla^2 f|_g \leq \Phi^{-1} |\nabla^2 \Phi^{-1}|_g +  |\nabla \Phi^{-1}|_g^2$ and $e^{-2f} |\nabla f|^2_g \linebreak[1] \leq \linebreak[1]  |\nabla \Phi^{-1}|^2_g$ are uniformly bounded on $V$.
So by (\ref{eq_conformal_change_formula}) we know that $\Rm_{\hat g}$ is uniformly bounded.

Next we claim that there exists a constant $ \hat v_0 = \hat v_0(n, v_0) > 0$ such that 
%\bec \lb{noncol_conf}
\[\vol_{\hat g} \big(B_{\hat g}(x, 1)\big)\geq \hat v_0 > 0 \qquad \text{for all} \quad x \in V.\]
%\eec
To see this fix some $x \in V$ and observe that by properties (b) and (iv), $\Phi^{-1}$ is $4 C$-Lipschitz and thus
\[ \frac12 \Phi (x) \leq \Phi \leq 2 \Phi (x) \quad \text{on} \quad B_{g}(x, r_{x}) \qquad \text{with} \quad r_{x}:= \frac{\Phi^{-1}(x)}{8C}. \]
Therefore $B_{\hat g} (x, 1) \supset B_g (x, r_{x})$
and thus, by a volume comparison estimate in $B_g (p,1)$ with $p \in U$, we obtain
\[ \vol_{\hat g} \big( B_{\hat g} (x, 1) \big) \geq \big(\tfrac12\Phi(x) \big)^{n} \vol_g \big( B_g (x, r_{x}) \big) \geq c(n) v_0 . \]

In summary, we have constructed a complete Riemannian manifold $( V, \hat g)$ with bounded curvature that satisfies the assumptions of Theorem~\ref{thm:ann_gral}.
Hence we can find $t_0 = t_0(n, v_0), C_0 = C_0 (n, v_0) > 0$,  and a Ricci flow $\hat g(t)$ with $\hat g(0) = \hat g$ satisfying
$$\hat\ell(\cdot, t) \leq  C_0(\eps + C(n)) \quad \text{and} \quad |\Rm_{\hat g(t)} | \leq \frac{C_0}{t} \quad \text{for all} \quad t \in (0, t_0].$$

To improve the lower bound on $\hat\ell$ on $U$, we argue as in equation (\ref{eq_ell_eps_bound}), in the proof of Theorem \ref{thm:ann_gral}:
For any $x \in U$ we have for some generic constant $C_1 = C_1 (n, \hat v_0)> 0$
\begin{align*}
 \hat\ell (x,t) &\leq \frac{C_1}{t^{n/2}} \int_{V} \exp \bigg({ - \frac{d_{\hat g}^2 (x,\cdot \, )}{C_1 t}}\bigg) \hat\ell (\cdot\, , 0) d\mu_{\hat g} \\
 &\leq \frac{C_1}{t^{n/2}} \bigg[\int_{B_{1- 1 / 2}} \!\!\!  \!\!\!\exp\bigg({ - \frac{d_{\hat g}^2 (x, \cdot)}{C_1 t}}\bigg) \ell(\cdot , 0)  d\mu_{\hat g} + \int_{V \setminus B_{1-1 / 2}}  \!\!\! \!\!\!  \exp \bigg({ - \frac{d_{\hat g}^2 (x,\cdot\, )}{C_1 t}}\bigg) \hat\ell (\cdot\, , 0) d\mu_{\hat g}\bigg] \\
& \leq C_1 \eps +  (C(n) + \eps) \cdot \frac{C_1}{t^{n/2}} \int_{V \setminus B_{1-1 / 2}} \exp \bigg({ - \frac{d_{\hat g}^2 (x,\, \cdot)}{C_1 t}}\bigg)  d\mu_{\hat g}
\end{align*}
If $t \leq t_1 (n, \eps)$, then the second term can be bounded by $C_1 \eps$.

The theorem now follows by restricting $\hat g(t)$ to $U$.
\end{proof}

We are finally in position to get a short-time existence result without upper curvature bounds.
\begin{proof}[{\bf Proof of Theorem~\ref{ste_new}.}]
The idea is to apply use the construction of the proof of Theorem~\ref{anco_local} with $r = 1$ and $U_i = B_g(p, R_i)$, for a sequence of radii $R_i \to \infty$. 
For each $i$, we obtain a Ricci flow $(V_i, g_i(t))_{t \in [0, \tau)}$ with bounded curvature defined on a complete manifold $V_i \supset U_i$, with $\tau = \tau(n, v_0)$, satisfying $g_i(0)|_{U_i} = g$ and so that $g_i(t)$ satisfies the desired curvature bounds \eqref{curv_bdd_thm2} for uniform constants. 
By Perelman's Pseudolocality Theorem (cf.~\cite[Theorem 10.3]{P1}) we obtain local uniform curvature bounds on an open neighborhood of $M \times \{0 \}$ in $M \times [0, \tau)$.
Combining these bounds with \eqref{curv_bdd_thm2} yield local uniform curvature bounds on $M \times [0, \tau)$.
Hence, by the generalized interior estimates of Shi (see \cite[Theorem 11]{LuTian}), we can pass to the limit to get a complete 
Ricci flow $(M, g(t))_{t \in [0, \tau)}$ that satisfies the curvature bounds in \eqref{curv_bdd_thm2}. 
\end{proof}

\end{document}